\documentclass[12pt]{article}
\usepackage{amsmath,amsfonts,amssymb,amsthm,graphicx}
\usepackage[arrow, matrix, tips]{xy}

\setlength{\textwidth}{7.0in}     
\setlength{\oddsidemargin}{0in}   
\setlength{\evensidemargin}{0in}  
\setlength{\textheight}{8.5in}    
\setlength{\topmargin}{0in}       
\setlength{\headheight}{0in}      
\setlength{\headsep}{0in}         
\setlength{\footskip}{.5in}       

\newtheorem*{theorem*}{Theorem}
\newtheorem{theorem}[subsubsection]{Theorem}
\newtheorem{cor}[subsubsection]{Corollary}
\newtheorem{lem}[subsubsection]{Lemma}
\newtheorem{prop}[subsubsection]{Proposition}

\newtheorem*{statement*}{Statement}

\theoremstyle{definition}
\newtheorem{defi}[subsubsection]{Definition}

\newtheorem{remark}[subsubsection]{Remark}

\numberwithin{equation}{section}

\newcommand{\cp}{{\mathbb{C}P}^n}
\newcommand{\Mod}{\mathcal{M}^d_g(\cp)}
\newcommand{\Mot}{\mathcal{MT}^d_g(\cp)}

\title{Stable topology of moduli spaces of holomorphic curves in $\cp$}

\author{David Ayala\footnote{This work was partially completed under the support of the ERC grant: ERC-AdG TMSS, no:228082}}

\begin{document}

\maketitle

\begin{abstract}
This paper is centered around a homotopy theoretic approximation to $\Mod$, the moduli space of degree $d$ holomorphic maps from genus $g$ Riemann surfaces into $\cp$.   There results a calculation of the integral cohomology ring $H^*(\Mod)$ for $*<<g<<d$.  The arguments follow those from a paper of G. Segal (\cite{segal:scanning}) on the topology of the space of rational functions.  

{\bf Note:
The present update points out an essential error in this work, which is detailed as Remark~\ref{error}.  
Any reader is invited to resolve, or circumvent, this issue.
}

\end{abstract}

\section*{Conventions}

Throughout this paper, all maps between topological spaces will be taken to be continuous and all diagrams commutative unless otherwise stated.  Sets of continuous maps are topologized with the compact open topology.  The symbol $H_q(-)$ will be used to denote the $q^{th}$ integral homology group of $-$.  

For $G$ a topological group acting continuously on a space $X$, the \textit{homotopy orbit space}, denoted $X//G$, is the standard orbit space $(EG\times X)/G$ where $EG$ is a chosen contractible space with a free action of $G$ and $G$ acts on the product by the diagonal action.  Observe that if the action of $G$ on $X$ is free then the projection $X//G \rightarrow X/G$ is a homotopy equivalence.

\section*{Introduction}

\subsection{Background and statement of main result}

A Riemann surface is a pair $(F_g,J)$ where $F_g\subset \mathbb{R}^\infty$ is a smooth oriented surface of genus $g$ embedded in Euclidean space, and $J$ is a complex structure on $F_g$ which agrees with the given orientation on $F_g$.  Write $\mathcal{J}_g$ for the space of such complex structures on $F_g$; a description of the topology of $\mathcal{J}_g$ is postponed to~\textsection\ref{sec:topology-of-J} where it is also shown to be contractible.

For $(F_g,J)$ a Riemann surface, write $Map^d( F_g , \cp )$ for the space of continuous maps from $F_g$ to $\cp$ having homological degree $d\in H_2({\mathbb{C}P}^n) \cong \mathbb{Z}$.  Write $Hol^d( (F_g,J) , \cp)$ for the subspace of $Map^d (F_g , \cp)$ consisting of holomorphic maps from $(F_g,J)$ to $\cp$.

\begin{theorem}[\cite{segal:scanning}]\label{thm:segal}
For $g>0$, the inclusion $Hol^d((F_g,J) , \cp) \hookrightarrow Map^d(F_g , \cp)$ induces an isomorphism in $H_q(-;\mathbb{Z})$ for $q < (d-2g)(2n-1)$.
\end{theorem}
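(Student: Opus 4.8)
The plan is to follow Segal's scanning argument (\cite{segal:scanning}). The first step is a \emph{divisor model} for the two spaces in play. Recall that a degree $d$ holomorphic map $(F_g,J)\to\cp$ is an $(n+1)$-tuple of holomorphic sections of a degree $d$ line bundle $L\to F_g$ with no common zero, taken up to the diagonal $\mathbb{C}^\times$-action. Fixing a hyperplane $H\subset\cp$ and normalizing (for instance relative to a basepoint $x_0\in F_g$), such a map $f$ is recorded by its zero divisor $D=f^{-1}(H)\in\mathrm{Sym}^d(F_g)$ together with, at each point $p$ of $D$, the values of the remaining coordinates of $f$ at $p$, a point of $\mathbb{C}^n\setminus\{0\}\simeq S^{2n-1}$; as the points of $D$ move and collide these labels interact through the little $2$-disks operad. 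I would make this precise to identify $Hol^d((F_g,J),\cp)$, up to a homology isomorphism through a range, with a space $\mathcal{D}^d(F_g)$ of such labeled configurations of $d$ points on $F_g$. The key input is that the fiber of the divisor map $\mathcal{D}^d(F_g)\to\mathrm{Sym}^d(F_g)$ over a generic reduced divisor is $\simeq (S^{2n-1})^d$, hence $(2n-2)$-connected, while the deviation of the genuine section data from this product model is controlled by Riemann--Roch: $h^0(L)=d-g+1$ once $d\geq 2g-1$, so evaluation of $H^0(L)$ at the points of $D$ fails to be surjective by about $g$, and this is a first source of the ``$-g$'' in the range.

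Second, bring in scanning. The conceptual reason both sides are ``divisor spaces'' is that $\cp\cong\mathrm{Sym}^n(\mathbb{C}P^1)$, so $Map^d(F_g,\cp)$ is itself a space of continuous families of $n$-point configurations on $\mathbb{C}P^1$. Restricting a labeled configuration to a family of small moving disks in $F_g$ and reading off the resulting ``electric field'' gives a scanning map $\mathcal{D}^d(F_g)\to\Gamma_d$, where $\Gamma_d$ is a degree-$d$ component of a space of sections of the fiberwise one-point-compactification bundle over $F_g$ assembled from the label spaces; the parametrized group-completion theorem of McDuff--Segal type identifies this section space with $Map^d(F_g,\cp)$, and I would check that the composite $Hol^d((F_g,J),\cp)\to\mathcal{D}^d(F_g)\to\Gamma_d\simeq Map^d(F_g,\cp)$ is homotopic to the inclusion of the statement. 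It then remains to bound the homological error of the scanning map.

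Third, the homological analysis of scanning reduces, in the familiar way, to a stability statement and a limit statement, both via McDuff--Segal quasifibration arguments, now carried out parametrically over $F_g$. A degree-raising map $\mathcal{D}^d(F_g)\to\mathcal{D}^{d+1}(F_g)$ --- glue in a standard local label near a fixed point disjoint from the configuration --- is shown to be a homology isomorphism through a range of slope $2n-1$, this constant being precisely the $(2n-2)$-connectivity of $S^{2n-1}$, which governs the truncated-configuration fibers appearing in the auxiliary quasifibration; in the colimit over $d$ the scanning map becomes a homotopy equivalence onto the relevant component of the section space. Propagating the stability ranges back to finite $d$ then yields the isomorphism for $q<(d-2g)(2n-1)$; here the second ``$-g$'' enters because the configuration bundle over the positive-genus surface $F_g$ (carrying the Picard torus and nontrivial monodromy) degrades the stable range relative to the flat $\mathbb{R}^2$ model. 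The step I expect to be the main obstacle is exactly this parametrized quasifibration/group-completion argument: over a surface of positive genus the clean little-disks machinery available over $\mathbb{R}^2$ must be carried out in a twisted, $F_g$-parametrized form, and checking that the McDuff--Segal hypotheses --- that the relevant maps are genuinely quasifibrations, that the monoid of path components is cofinal, and that homology stabilizes with the asserted constants --- all survive the twisting is the delicate point at which a gap can hide.
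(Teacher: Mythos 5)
There is a genuine gap, and it sits exactly where your first step waves its hands. You propose to identify $Hol^d((F_g,J),\cp)$, ``up to a homology isomorphism through a range,'' with a space of configurations on $F_g$ carrying free labels in $\mathbb{C}^n\setminus\{0\}\simeq S^{2n-1}$. For $g>0$ this identification is not a preliminary reduction --- it is essentially the theorem. The labels are not free: the $(n+1)$ sections are global sections of the line bundle $\mathcal{O}(D)$, the divisor $D$ is constrained to a linear system, and the evaluation $H^0(L)\to L|_D$ has image of codimension $g$, as you yourself note. Segal's proof does not pass through a free-label model at all; its engine is Abel's theorem. A based holomorphic map is encoded by an $(n+1)$-tuple of disjoint positive divisors all having the \emph{same} image under the Abel--Jacobi map, and the technical heart (Proposition~4.5 of \cite{segal:scanning}, the analogue of Theorem~\ref{thm:homology-fibration} here) is that the integration map from the space of such divisor tuples to the Jacobian torus is a homology fibration up to degree $d-2g$, proved by stratifying $Sp_d\times Sp_d$ by the degree of the common part and inducting, with Riemann--Roch entering only through the fact that $Sp_d\to T_J$ is a projective bundle once $d>2g$. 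The fiber of that homology fibration over $0$ is $Hol^d_*$, and only after this step does scanning (comparing the stabilized divisor space with $Map_*(F_g,{\mathbb{C}P}^\infty\vee{\mathbb{C}P}^\infty)$, resp.\ $W_{n+1}({\mathbb{C}P}^\infty)$, and $T_0$ with $Map^0_*(F_g,{\mathbb{C}P}^\infty)$) convert the statement into the asserted comparison with $Map^d_*$, with the unbased case following from the evaluation fibration. Your sketch never formulates the Abel--Jacobi map or the homology fibration over the torus, so the range $(d-2g)(2n-1)$ is asserted rather than derived: attributing one ``$-g$'' to non-surjectivity of evaluation and another to ``monodromy degrading the stable range'' names symptoms but supplies no mechanism, and in Segal's argument the $2g$ arises in one place, from the stratification/induction over the Jacobian, not from two separate sources.

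Relatedly, the step you flag yourself --- the parametrized, twisted McDuff--Segal quasifibration argument over $F_g$ --- is not where Segal's proof lives either; his scanning map goes from the \emph{stabilized} divisor space $\widehat{Div}$ to a based mapping space into ${\mathbb{C}P}^\infty\vee{\mathbb{C}P}^\infty$ (using $Sp(S^2,\infty)\cong{\mathbb{C}P}^\infty$), and is an honest homotopy equivalence there, so no group-completion over a positive-genus base is ever needed. As written, your proposal defers both the identification of the holomorphic side with a configuration model and the control of the genus correction to ``delicate points where a gap can hide,'' and those are precisely the points the cited proof resolves by the Jacobian homology-fibration argument; without that ingredient the proposed proof does not go through.
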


This paper is devoted to proving such a theorem as the complex structure $J$ is allowed to vary.  This will amount to a statement about the moduli space of Riemann surfaces which will be defined presently.

For a fixed surface $F_g$, consider the set of pairs 
\begin{equation}\label{1}
\mathcal{J}^d_g({\mathbb{C}P}^n):=\{ ( J , h) \mid J \in \mathcal{J}_g \text{ and } h\in Hol^d( (F_g,J) , \cp ) \}.
\end{equation}
Topologize $\mathcal{J}^d_g({\mathbb{C}P}^n)\subset \mathcal{J}_g \times Map^d( F_g , \cp )$ as a subspace.
Let $Diff^+_g$ be the topological group of orientation preserving diffeomorphisms of $F_g$ endowed with the Whitney $C^\infty$ topology.  There is a continuous action of $Diff^+_g$ on $\mathcal{J}^d_g({\mathbb{C}P}^n)$ given by pulling back the complex structure and precomposing maps.  Define the (coarse) \textit{moduli space of degree $d$ genus $g$ holomorphic curves in $\cp$} as the resulting quotient 
\[
\Mod^c := \mathcal{J}^d_g({\mathbb{C}P}^n)/{Diff^+_g}.
\]
Similarly, consider the orbit space
\[
\Mot^c := (\mathcal{J}_g \times Map^d(F_g,\cp))/{Diff^+_g};
\]
referred to as the \textit{(coarse) topological moduli space of degree $d$ genus $g$ curves in $\cp$}.

More well-behaved are the homotopy orbit spaces
\[
\mathcal{M}^d_g(\cp):=\mathcal{J}^d_g(\cp)//Diff^+_g
\]
and
\[
\mathcal{MT}^d_g(\cp):=(\mathcal{J}_g\times Map^d(F_g,\cp))//Diff^+_g
\]
which will be referred to as the \textit{moduli space of degree $d$ genus $g$ curves in $\cp$}, and its topological counterpart, respectively.  
Note that the contracitbility of $\mathcal{J}_g$ implies the natural projection $\mathcal{MT}^d_g(\cp)\rightarrow Map^d(F_g,\cp)//Diff^+_g$ is a homotopy equivalence.

\begin{theorem}[Main Theorem]\label{thm:main-theorem}
The map 
\[
\mathcal{M}^d_g(\cp) \rightarrow \mathcal{MT}^d_g(\cp)
\]
induced by the natural inclusion induces an isomorphism in $H_q(-)$ for $q< (d-2g)(2n-1)$.
\end{theorem}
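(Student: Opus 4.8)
The plan is to realize both sides as fibre bundles over $BDiff^+_g$ and reduce, fibrewise, to Theorem~\ref{thm:segal}. First dispose of the trivial case: if $d\le 2g$ then $(d-2g)(2n-1)\le 0$ and there is nothing to prove, so assume $d>2g$. Both moduli spaces are homotopy orbit constructions, hence fibre bundles associated to the universal principal bundle $EDiff^+_g\to BDiff^+_g$: the bundle
\[
\Mod = EDiff^+_g\times_{Diff^+_g}\mathcal{J}^d_g(\cp)\longrightarrow BDiff^+_g
\]
has fibre $\mathcal{J}^d_g(\cp)$, the bundle
\[
\Mot = EDiff^+_g\times_{Diff^+_g}\bigl(\mathcal{J}_g\times Map^d(F_g,\cp)\bigr)\longrightarrow BDiff^+_g
\]
has fibre $\mathcal{J}_g\times Map^d(F_g,\cp)$, and the map in the statement is a map of bundles over $BDiff^+_g$ which on fibres is the inclusion $\mathcal{J}^d_g(\cp)\hookrightarrow\mathcal{J}_g\times Map^d(F_g,\cp)$. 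It therefore suffices to show: (i) this inclusion of fibres is an isomorphism on $H_q(-)$ for $q<(d-2g)(2n-1)$; and (ii) a fibrewise homology isomorphism in this range propagates to the total spaces over $BDiff^+_g$.

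For (i), project both fibres to $\mathcal{J}_g$ by $(J,h)\mapsto J$. The fibre of $\mathcal{J}^d_g(\cp)\to\mathcal{J}_g$ over $J$ is $Hol^d((F_g,J),\cp)$, the map $\mathcal{J}_g\times Map^d(F_g,\cp)\to\mathcal{J}_g$ is the trivial bundle with fibre $Map^d(F_g,\cp)$, and over each point $J$ the inclusion of these fibres is exactly the Segal inclusion of Theorem~\ref{thm:segal}, an isomorphism on $H_q(-)$ for $q<(d-2g)(2n-1)$ --- a range that does not depend on $J$. Granting that $\mathcal{J}^d_g(\cp)\to\mathcal{J}_g$ is a quasifibration, contractibility of $\mathcal{J}_g$ makes the inclusion of a single fibre $Hol^d((F_g,J_0),\cp)\hookrightarrow\mathcal{J}^d_g(\cp)$ a homology isomorphism. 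One then has a commutative square whose left edge is this fibre inclusion, whose right edge is the homotopy equivalence $\{J_0\}\times Map^d(F_g,\cp)\hookrightarrow\mathcal{J}_g\times Map^d(F_g,\cp)$, whose bottom edge is the inclusion of (i), and whose top edge is the Segal inclusion; since the left edge is a homology isomorphism, the right edge a homotopy equivalence, and the top edge a homology isomorphism in degrees $<(d-2g)(2n-1)$, so is the bottom edge. The step I expect to be the main obstacle is precisely the claim that $\mathcal{J}^d_g(\cp)\to\mathcal{J}_g$ is a quasifibration: for $d>2g$ Riemann--Roch forces $h^0$ of a degree-$d$ line bundle to equal $d+1-g$ regardless of the complex structure, so one expects $\mathcal{J}^d_g(\cp)$ to assemble over $\mathcal{J}_g$ out of the universal Jacobian together with a universal family of $(n+1)$-tuples of sections with no common zero; but verifying local triviality, or a Dold-type quasifibration criterion, as the complex structure degenerates is the technical heart of the matter.

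For (ii), the map of bundles over $BDiff^+_g$ induces a map of Serre spectral sequences with $E^2_{p,q}=H_p\bigl(BDiff^+_g;H_q(\text{fibre})\bigr)$, compatible with the monodromy $\pi_1(BDiff^+_g)$-actions since the bundle map covers the identity of $BDiff^+_g$. By (i) the coefficient local systems $H_q(\mathcal{J}^d_g(\cp))$ and $H_q(\mathcal{J}_g\times Map^d(F_g,\cp))$ are isomorphic for $q<(d-2g)(2n-1)$, so the induced map is an isomorphism on $E^2_{p,q}$ for all $p$ and all $q<(d-2g)(2n-1)$. The comparison theorem for first-quadrant homology spectral sequences (Zeeman) then gives that $H_m(\Mod)\to H_m(\Mot)$ is an isomorphism for $m<(d-2g)(2n-1)$, which is the assertion.
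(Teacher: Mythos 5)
Your step (ii) --- the comparison of Serre spectral sequences for the two bundles over $BDiff^+_g$ --- is exactly how the paper passes from the statement about $\mathcal{J}^d_g(\cp)\hookrightarrow\mathcal{J}_g\times Map^d(F_g,\cp)$ to the statement about homotopy quotients (this is the proof of Theorem~\ref{thm:main-theorem-1}), and that part of your argument is fine. The genuine gap is in step (i): your entire reduction to Theorem~\ref{thm:segal} hinges on the sentence ``Granting that $\mathcal{J}^d_g(\cp)\to\mathcal{J}_g$ is a quasifibration,'' and this is never proved. You correctly identify it as the main obstacle, but flagging it does not discharge it. There is no general principle that a map all of whose fibres are homology equivalent \emph{in a range} to a fixed space is a quasifibration or even a homology fibration in that range: Segal's theorem controls $H_q(Hol^d((F_g,J),\cp))$ only for $q<(d-2g)(2n-1)$, the homology outside that range may depend on $J$, and local constancy of the fibrewise homology over $\mathcal{J}_g$ (or a Dold--Thom criterion as the complex structure moves) is precisely what needs an argument. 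Riemann--Roch giving constant $h^0=d+1-g$ for $d>2g$ suggests a plausible parametrized Abel--Jacobi picture, but it does not by itself yield local triviality or the quasifibration property.

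This is not a small technicality but the entire content of the paper. The body of the paper (the divisor spaces $Div_d$ and $\widehat{Div}$, the parametrized Abel--Jacobi map $\mathcal{I}:\mathcal{J}_g\times Div_d\to T_0$ and Theorem~\ref{thm:homology-fibration}, the scanning map, and the comparison of fibration sequences culminating in Lemma~\ref{lem:homology-fibration-2} and Lemma~\ref{lem:main-lemma}) exists exactly because one cannot simply quote Segal fibrewise over $\mathcal{J}_g$: the author reruns Segal's whole scanning argument with $J$ as a parameter in order to prove the statement you call (i). Moreover, even that parametrized route has an unresolved issue, recorded in Remark~\ref{error}: the construction of $\widehat{Div}$ requires a continuous section of $\mathcal{J}^{d_0}_g({\mathbb{C}P}^1)_*\to\mathcal{J}_g$, whose existence the author could not justify and believes to be false. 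Your quasifibration hypothesis is a claim of the same parametrized nature (indeed, an honest fibration over the contractible, paracompact $\mathcal{J}_g$ would produce such a section), so assuming it assumes away exactly the difficulty that currently invalidates the paper's own proof. As written, your argument is a correct reduction of the Main Theorem to an unproven --- and, in light of Remark~\ref{error}, genuinely doubtful --- claim, not a proof.
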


\begin{remark}
Theorem~\ref{thm:main-theorem} can be viewed as a statement about families.  Indeed, an isomorphism in $H_*$ implies an isomorphism in oriented bordism $MSO_*$.  As so Theorem~\ref{thm:main-theorem} says that any  $q$-dimensional family of continuous curves in $\cp$ is cobordant to a family of holomorphic curves provided $q<(d-2g)(2n-1)$.
\end{remark}

Recall the action of $Diff^+_g$ on $\mathcal{J}_g$.  Via Teichmuller theory, the isotropy subgroups $(Diff^+_g)_J$  are finite for each $J \in \mathcal{J}_g$. 
Through a standard spectral sequence argument and the contractibility of $\mathcal{J}_g$, the product $EDiff^+_g \times \mathcal{J}_g$, with the diagonal action of $Diff^+_g$, becomes a \textit{rational} model for $EDiff^+_g$.  It follows that the projection maps 
\[
Map^d(F_g,\cp)//Diff^+_g\simeq  \mathcal{MT}^d_g(\cp) \rightarrow \Mot^c
\]
and
\[
\mathcal{M}^d_g(\cp)\rightarrow \mathcal{M}^d_g(\cp)^c
\]
induce isomorphisms on rational singular homology $H_*(-;\mathbb{Q})$.  There is an immediate corollary.

\begin{cor}
Induced by the obvious inclusion, the map 
\[
\mathcal{M}^d_g({\mathbb{C}P}^n)^c \rightarrow \mathcal{MT}^d_g({\mathbb{C}P}^n)^c
\]
induces an isomorphism in $H_q(-;\mathbb{Q})$ for $q<(d-2g)(2n-1)$.
\end{cor}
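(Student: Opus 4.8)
The plan is to deduce the statement formally from the Main Theorem (Theorem~\ref{thm:main-theorem}) together with the rational comparison between the homotopy orbit spaces and their coarse quotients recorded in the paragraph just above. First I would assemble the commutative square
\[
\begin{array}{ccc}
\mathcal{M}^d_g(\cp) & \longrightarrow & \mathcal{MT}^d_g(\cp) \\[4pt]
\big\downarrow & & \big\downarrow \\[4pt]
\mathcal{M}^d_g(\cp)^c & \longrightarrow & \mathcal{MT}^d_g(\cp)^c
\end{array}
\]
in which the horizontal arrows are induced by the $Diff^+_g$-equivariant inclusion $\mathcal{J}^d_g(\cp)\hookrightarrow \mathcal{J}_g\times Map^d(F_g,\cp)$ and the vertical arrows are the natural projections from the Borel construction to the strict orbit space. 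Commutativity is formal: $EDiff^+_g\times(-)$ and $(-)/Diff^+_g$ are functors on $Diff^+_g$-spaces, the Borel-construction-to-quotient transformation is natural, and the inclusion of holomorphic maps into all maps is strictly equivariant.

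Next I would feed in the two inputs. On the one hand, the top horizontal arrow induces an isomorphism on $H_q(-;\mathbb{Z})$ for $q<(d-2g)(2n-1)$ by Theorem~\ref{thm:main-theorem}; since tensoring with $\mathbb{Q}$ is exact (so $H_q(-;\mathbb{Q})\cong H_q(-;\mathbb{Z})\otimes\mathbb{Q}$, with no Tor contribution), this upgrades to an isomorphism on $H_q(-;\mathbb{Q})$ in the same range. On the other hand, the discussion preceding the statement gives that both vertical arrows are isomorphisms on $H_*(-;\mathbb{Q})$ in every degree: the isotropy groups $(Diff^+_g)_J$ are finite by Teichm\"uller theory, $\mathcal{J}_g$ is contractible, and a standard spectral sequence argument then shows $EDiff^+_g\times\mathcal{J}_g$ is a rational model for $EDiff^+_g$, so passing to strict quotients costs nothing rationally. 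A three-out-of-four diagram chase in the induced square of rational homology groups then forces the bottom horizontal arrow to be an isomorphism on $H_q(-;\mathbb{Q})$ for $q<(d-2g)(2n-1)$, as asserted.

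There is essentially no hard step here; the entire content sits in the Main Theorem and in the already-established rational identification of the two orbit-space models. The only points that deserve genuine (if routine) care are verifying the commutativity of the square by unwinding the naturality just indicated, and confirming that the homology range is not degraded under the change of coefficients from $\mathbb{Z}$ to $\mathbb{Q}$ — which it is not, by exactness of $-\otimes\mathbb{Q}$. It should be emphasized that the caveat flagged in the abstract pertains to the proof of Theorem~\ref{thm:main-theorem}, not to this purely formal deduction; conditional on that theorem, the corollary is immediate.
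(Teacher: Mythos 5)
Your proposal is correct and follows exactly the paper's intended route: the paper treats this as an ``immediate corollary'' of the preceding paragraph (the vertical projections $\mathcal{M}^d_g(\cp)\to\mathcal{M}^d_g(\cp)^c$ and $\mathcal{MT}^d_g(\cp)\to\mathcal{MT}^d_g(\cp)^c$ being rational homology isomorphisms via finite isotropy and the contractibility of $\mathcal{J}_g$) combined with Theorem~\ref{thm:main-theorem} for the top arrow, which is precisely your commutative square and diagram chase. Your added remarks on rationalizing the integral isomorphism via flatness of $\mathbb{Q}$ and on the conditional dependence on the Main Theorem are accurate but do not change the argument.
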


\begin{remark}
As described in~\textsection\ref{sec:topology-of-J}, the space $\mathcal{J}_g$ of complex structures on $F_g$ is canonically homeomorphic to the space of almost-complex structures on 
$F_g$.  In this way, the construction of  $\mathcal{MT}^d_g({\mathbb{C}P}^n)$ can be regarded as homotopy theoretic.  
\end{remark}

\begin{cor}
Induced by a zig-zag of maps of spaces is an isomorphism  
\[
H_q(\Mod) \cong
H_q(Map^d(F_g,\cp)//Diff^+_g)
\]
for $q<(d-2g)(2n-1).$
\end{cor}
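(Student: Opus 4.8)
The plan is to realize the displayed isomorphism as the composite of two already-established maps, so that the statement becomes a formal consequence of Theorem~\ref{thm:main-theorem} together with the contractibility of $\mathcal{J}_g$. Concretely, I would consider
\[
\Mod \;=\; \mathcal{M}^d_g(\cp) \;\xrightarrow{\ \iota\ }\; \mathcal{MT}^d_g(\cp) \;\xrightarrow{\ p\ }\; Map^d(F_g,\cp)//Diff^+_g,
\]
where $\iota$ is the map of homotopy orbit spaces induced by the $Diff^+_g$-equivariant inclusion $\mathcal{J}^d_g(\cp)\hookrightarrow \mathcal{J}_g\times Map^d(F_g,\cp)$, and $p$ is induced by the $Diff^+_g$-equivariant projection $\mathcal{J}_g\times Map^d(F_g,\cp)\to Map^d(F_g,\cp)$ that forgets the complex structure.

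First I would invoke Theorem~\ref{thm:main-theorem}: it asserts precisely that $\iota_*\colon H_q(\Mod)\to H_q(\mathcal{MT}^d_g(\cp))$ is an isomorphism in the range $q<(d-2g)(2n-1)$. Next I would record that $p$ is a homotopy equivalence, which is the observation already noted after the definition of $\mathcal{MT}^d_g(\cp)$: since $\mathcal{J}_g$ is contractible and $Diff^+_g$ acts freely on $EDiff^+_g$, the diagonal action on $EDiff^+_g\times \mathcal{J}_g$ is free with contractible total space, so $EDiff^+_g\times\mathcal{J}_g$ is itself a model for $EDiff^+_g$; the forgetful map
\[
\mathcal{MT}^d_g(\cp)=(EDiff^+_g\times\mathcal{J}_g\times Map^d(F_g,\cp))/Diff^+_g \longrightarrow (EDiff^+_g\times Map^d(F_g,\cp))/Diff^+_g = Map^d(F_g,\cp)//Diff^+_g
\]
then realizes the corresponding homotopy equivalence. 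In particular $p_*$ is an isomorphism on $H_q$ in every degree.

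Composing, $p_*\circ\iota_*\colon H_q(\Mod)\to H_q(Map^d(F_g,\cp)//Diff^+_g)$ is an isomorphism for $q<(d-2g)(2n-1)$, which is the assertion; the phrase ``zig-zag'' is justified since, should one prefer, $p$ may be replaced by a homotopy inverse so as to exhibit the isomorphism in the reverse direction without affecting anything on homology. I do not expect a genuine obstacle here: the entire substance sits inside Theorem~\ref{thm:main-theorem}, and the only points requiring care are (i) checking that $\iota$ and $p$ are composable in the same direction, which they are, both being the identity on the $Map^d(F_g,\cp)$ coordinate, and (ii) confirming that the homology range inherited from Theorem~\ref{thm:main-theorem} is not further restricted, which it is not since $p_*$ is an isomorphism in all degrees.
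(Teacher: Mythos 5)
Your proposal is correct and is essentially the argument the paper intends: the corollary is the formal composite of Theorem~\ref{thm:main-theorem} with the observation, recorded right after the definition of $\mathcal{MT}^d_g(\cp)$, that contractibility of $\mathcal{J}_g$ makes the projection $\mathcal{MT}^d_g(\cp)\to Map^d(F_g,\cp)//Diff^+_g$ a homotopy equivalence. Your justification of that equivalence via $EDiff^+_g\times\mathcal{J}_g$ being another model for $EDiff^+_g$ matches the paper's reasoning, so there is nothing further to add.
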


There is the following useful group completion result due to Cohen and Madsen.

\begin{theorem}[\cite{cohen-madsen:background}]
For $X$ simply connected and for $h_*$ any connective homology theory, there is a map 
\[
Map(F_g,X)//Diff^+(F_g)\rightarrow \Omega^\infty({\mathbb{C}P}^\infty_{-1} \wedge X_+)
\]
which induces an isomorphism in $h_q$ for $q>(g-5)/2$.
\end{theorem}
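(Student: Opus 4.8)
\emph{Proof proposal.} The plan is to follow Madsen--Weiss's proof of the Mumford conjecture, carried out over the background space $X$ as in \cite{cohen-madsen:background}. Since each component of $Diff^+(F_g)$ is contractible for $g\geq 2$, one may replace $Map(F_g,X)//Diff^+(F_g)$ by $B\Gamma_g(X):=Map(F_g,X)//\Gamma_g$, where $\Gamma_g=\pi_0 Diff^+(F_g)$; the target ${\mathbb{C}P}^\infty_{-1}\wedge X_+$ is the Thom spectrum of $-L\to{\mathbb{C}P}^\infty=BSO(2)$ smashed with $X_+$, also written $\mathbf{MTSO}(2)\wedge X_+$. The comparison map itself is the parametrized Pontryagin--Thom (scanning) map: an embedding $F_g\subset\mathbb{R}^m$ together with a map $f\colon F_g\to X$ produces, by collapsing onto a tubular neighbourhood and recording the classifying map of the normal bundle (equivalently of the virtual complex line bundle $-TF_g$) together with $f$, an element of $\Omega^m(\mathrm{Th}(\nu)\wedge X_+)$; letting $m\to\infty$ and the embedding vary gives the asserted map, and the theorem is the claim that it is an $h_*$-isomorphism in the indicated range of degrees.

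Two principal ingredients enter. The first is the parametrized Galatius--Madsen--Tillmann--Weiss theorem. One introduces the cobordism category $\mathcal{C}^X$ whose objects are closed $1$-manifolds in $\mathbb{R}^\infty$ equipped with a map to $X$, and whose morphisms are compact surfaces in $[0,t]\times\mathbb{R}^\infty$, collared at the ends, likewise equipped with a map to $X$, composition being concatenation. The scanning / sheaf-of-surfaces computation of Galatius--Madsen--Tillmann--Weiss, with the map to $X$ carried along as extra structure, identifies the classifying space $B\mathcal{C}^X$ with $\Omega^{\infty-1}(\mathbf{MTSO}(2)\wedge X_+)$; the smash factor $X_+$ is precisely the bookkeeping for the extra map in the Pontryagin--Thom collapse.

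The second ingredient, applied after restricting to connected morphisms (and to surfaces with one free boundary circle, using stability to ignore the boundary), is a group-completion argument: a path component of $\Omega B\mathcal{C}^X$ is a group completion of the topological monoid $\coprod_g B\Gamma_g(X)$ under handle-stabilization. By the group-completion theorem of McDuff--Segal, the resulting map $\mathbb{Z}\times\big(\mathrm{colim}_g B\Gamma_g(X)\big)^+\to\Omega^\infty(\mathbf{MTSO}(2)\wedge X_+)$ is a homology isomorphism onto the relevant components, hence an $h_*$-isomorphism for every connective $h_*$. To convert this into a statement about $B\Gamma_g(X)$ itself one needs homological stability: that $H_q(B\Gamma_g(X))\to H_q(B\Gamma_{g+1}(X))$ is an isomorphism in a range growing linearly with $g$. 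This is where $X$ being simply connected is used: running the Serre spectral sequence of $Map(F_g,X)\to B\Gamma_g(X)\to B\Gamma_g$, simple connectivity forces the fibre's homology to constitute a coefficient system of finite degree in $g$ (the mapping space is built from finitely many principal stages, each contributing boundedly), so Harer's stability theorem for $\Gamma_g$ upgrades --- via the Ivanov--Boldsen / Randal-Williams--Wahl machinery for finite-degree coefficient systems --- to stability for $B\Gamma_g(X)$, the bound $(g-5)/2$ being the resulting Harer-type range. With stability in hand the colimit and the plus construction are undone in that range, and naturality of the scanning map in $g$ shows the comparison map is already defined on $B\Gamma_g(X)\simeq Map(F_g,X)//Diff^+(F_g)$.

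I expect the homological stability step to be the principal obstacle: proving that the mapping-space coefficient systems really are of finite degree, tracking how the background space affects Harer's stability slope, and reconciling the stability picture with the monoid and group-completion formalism. By contrast the Galatius--Madsen--Tillmann--Weiss identification is by now a robust mechanism that accommodates a background space with no essential change, and the Pontryagin--Thom map together with the McDuff--Segal group completion are formal.
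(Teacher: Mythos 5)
This theorem is not proved in the paper at all---it is imported verbatim from Cohen--Madsen \cite{cohen-madsen:background}---so there is no internal proof to compare against; your outline (the parametrized Pontryagin--Thom/cobordism-category identification of the stable target with $\Omega^\infty({\mathbb{C}P}^\infty_{-1}\wedge X_+)$, group completion, and homological stability for surfaces with maps to a simply connected background, where simple connectivity makes the coefficient systems $H_q(Map(F_g,X))$ of finite degree) is essentially the strategy of the cited paper, and is a reasonable sketch rather than a gap. The only caveats worth recording are that the closed-surface statement needs a separate stability argument beyond the bounded-surface/monoid setting you describe, and that the range as printed in the paper should have the inequality reversed (an isomorphism for $q$ \emph{below} the Harer-type bound, e.g.\ $2q+4\leq g$), the displayed $q>(g-5)/2$ being a typo.
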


\begin{cor}
Induced by a zig-zag of maps of spaces is an isomorphism 
\[
H_q(\Mod) \cong H_q(\Omega^\infty ({\mathbb{C}P}^\infty_{-1} \wedge {\mathbb{C}P}^n) )
\]
for $q<(d-2g)(2n-1) $ and $q<(g-5)/2$.
\end{cor}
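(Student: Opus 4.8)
The plan is to concatenate the previous Corollary with the Cohen--Madsen group completion theorem, the latter applied to the simply connected space $\cp$ and to ordinary integral homology (a connective homology theory); no new geometric input is needed beyond what has already been assembled.

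First, the previous Corollary supplies a zig-zag of maps of spaces realizing an isomorphism $H_q(\Mod)\cong H_q(Map^d(F_g,\cp)//Diff^+_g)$ for $q<(d-2g)(2n-1)$. This reduces the computation of $H_q(\Mod)$ in that range to the computation of $H_q(Map^d(F_g,\cp)//Diff^+_g)$, so it suffices to identify the homology of the latter with $H_q(\Omega^\infty({\mathbb{C}P}^\infty_{-1}\wedge\cp))$ in the overlapping range.

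Second, I would apply the Cohen--Madsen theorem with $X=\cp$. Since orientation preserving diffeomorphisms of $F_g$ fix the fundamental class, the degree homomorphism $Map(F_g,\cp)\to H_2(\cp)\cong\mathbb{Z}$ is $Diff^+_g$-invariant, and (as $n\ge1$) it identifies $\pi_0$; thus $Map(F_g,\cp)//Diff^+_g=\coprod_{d}Map^d(F_g,\cp)//Diff^+_g$ with each summand path connected. The Cohen--Madsen map respects this decomposition of $\pi_0$, so its restriction to the degree $d$ summand is a map $Map^d(F_g,\cp)//Diff^+_g\to\Omega^\infty({\mathbb{C}P}^\infty_{-1}\wedge(\cp)_+)$ that lands in a single path component and induces an isomorphism in $H_q$ in the range provided by that theorem (a range growing linearly with $g$). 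Since any two path components of an infinite loop space are homotopy equivalent, the homology of that component coincides with that of the basepoint component, which, after passing between the pointed and unpointed forms of the smash factor via the cofiber sequence $S^0\to(\cp)_+\to\cp$, is $H_q\big(\Omega^\infty({\mathbb{C}P}^\infty_{-1}\wedge\cp)\big)$. Splicing this map into the zig-zag of the previous Corollary produces the asserted isomorphism, valid wherever both ranges hold, namely for $q<(d-2g)(2n-1)$ and $q<(g-5)/2$.

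The step I expect to be the genuine obstacle is the bookkeeping at the level of path components together with the passage between $\Omega^\infty({\mathbb{C}P}^\infty_{-1}\wedge(\cp)_+)$ and $\Omega^\infty({\mathbb{C}P}^\infty_{-1}\wedge\cp)$: one must verify that the degree $d$ component of $Map(F_g,\cp)//Diff^+_g$ is carried to a component of the former whose homology is that of $\Omega^\infty({\mathbb{C}P}^\infty_{-1}\wedge\cp)$ as displayed, the relevant point being that a priori these two infinite loop spaces are related only by the fibration induced by the cofiber sequence above. Everything else is a formal concatenation of previously established maps and isomorphisms.
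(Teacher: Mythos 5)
Your overall route --- splicing the previous Corollary with the Cohen--Madsen theorem applied to $X=\cp$, with the $\pi_0$ bookkeeping handled by translating between path components of the infinite loop space --- is exactly the (implicit, one-line) argument the paper intends, and up to that point the proposal is fine. The genuine problem is the very step you flagged as the obstacle, and your proposed resolution of it fails. The cofiber sequence $S^0\to(\cp)_+\to\cp$ is split (collapsing $\cp$ to a point retracts $(\cp)_+$ onto $S^0$), so smashing with ${\mathbb{C}P}^\infty_{-1}$ gives a splitting of spectra ${\mathbb{C}P}^\infty_{-1}\wedge(\cp)_+\simeq {\mathbb{C}P}^\infty_{-1}\vee\bigl({\mathbb{C}P}^\infty_{-1}\wedge\cp\bigr)$ and hence an equivalence $\Omega^\infty({\mathbb{C}P}^\infty_{-1}\wedge(\cp)_+)\simeq \Omega^\infty{\mathbb{C}P}^\infty_{-1}\times\Omega^\infty({\mathbb{C}P}^\infty_{-1}\wedge\cp)$. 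By K\"unneth, the homology of the relevant path component is therefore the tensor product $H_*(\Omega^\infty_0{\mathbb{C}P}^\infty_{-1})\otimes H_*\bigl(\Omega^\infty_0({\mathbb{C}P}^\infty_{-1}\wedge\cp)\bigr)$, and the first factor is far from trivial: it is the Madsen--Weiss target carrying the $\kappa$-classes (already $H^2(\Omega^\infty_0{\mathbb{C}P}^\infty_{-1};\mathbb{Q})\neq 0$). So a component of $\Omega^\infty({\mathbb{C}P}^\infty_{-1}\wedge(\cp)_+)$ does not have the homology of a component of $\Omega^\infty({\mathbb{C}P}^\infty_{-1}\wedge\cp)$, and no passage along the cofiber sequence can make it so; your last sentence of the second paragraph asserts an identification that is simply false.

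What your argument does prove (and what the Corollary must mean; compare the next Corollary, whose answer $A((\mathcal{K}\otimes\mathbb{Q}[c])_+)$ contains the classes $k_i\otimes 1$ coming precisely from the $\Omega^\infty{\mathbb{C}P}^\infty_{-1}$ factor) is $H_q(\Mod)\cong H_q\bigl(\Omega^\infty({\mathbb{C}P}^\infty_{-1}\wedge({\mathbb{C}P}^n)_+)\bigr)$ in the stated range: the smash in the display should be read with a disjoint basepoint added (a dropped $+$), not as a reduced smash. So the correct fix is to delete your final reduction step rather than repair it --- keep the $+$ (equivalently, keep the extra $\Omega^\infty{\mathbb{C}P}^\infty_{-1}$ factor) and splice the Cohen--Madsen map directly into the zig-zag from the previous Corollary. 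As a minor side remark, the inequality in the quoted Cohen--Madsen theorem (``$q>(g-5)/2$'') is itself a typo for a stable range of the form $q<(g-5)/2$, which is the range you correctly used.
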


The rational (co)homology of $\Omega^\infty$-spaces being reasonably well-understood, there is the following corollary which requires some notation to state.  For $V$ a graded vector space over $\mathbb{Q}$, denote by $A(V)$ the free graded-commutative $\mathbb{Q}$-algebra generated by $V$.  Let $\mathcal{K}$ be the graded vector space over $\mathbb{Q}$ generated by the set $\{k_i\}_{i\geq -1}$ where $\lvert k_i \rvert = 2i$.  Let $W$ another graded vector space over $\mathbb{Q}$.  Denote by $(\mathcal{K}\otimes W)_+$ the positively graded summand of the vector space $\mathcal{K}\otimes W$.  Recall that $H^*({\mathbb{C}P}^n) \cong \mathbb{Q}[c]/{c^{n+1}}$ where $\lvert c \rvert=2$.

\begin{cor}
There is an isomorphism of graded rings
\[
H^*(\Mod;\mathbb{Q})\cong A((\mathcal{K}\otimes \mathbb{Q}[c])_+).
\]
through the range $*<(d-2g)(2n-1)$ and $q<(g-5)/2$.
\end{cor}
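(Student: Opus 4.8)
The plan is to reduce, by way of the preceding corollary, to a computation carried out entirely in stable homotopy theory. That corollary supplies a zig-zag of maps of spaces realizing an isomorphism $H_q(\Mod)\cong H_q\bigl(\Omega^\infty(\mathbb{C}P^\infty_{-1}\wedge\cp)\bigr)$ for $q<(d-2g)(2n-1)$ and $q<(g-5)/2$. Since every space in sight is of finite type and each leg of the zig-zag is an honest map of spaces, passing to rational cohomology turns this into an isomorphism of graded rings $H^*\bigl(\Omega^\infty(\mathbb{C}P^\infty_{-1}\wedge\cp);\mathbb{Q}\bigr)\cong H^*(\Mod;\mathbb{Q})$ through the same range. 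It therefore suffices to identify the left-hand ring with $A((\mathcal{K}\otimes\mathbb{Q}[c])_+)$ in that range.

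First I would recall that rationally the cohomology of an infinite loop space is free graded-commutative. An infinite loop space carries trivial rational Whitehead products, so a basepoint component $\Omega^\infty_0E$ of the zeroth space of a connective finite-type spectrum $E$ is rationally equivalent to the product $\prod_{i>0}K(\pi_i(E)\otimes\mathbb{Q},i)\simeq\prod_{i>0}K(H_i(E;\mathbb{Q}),i)$; as the rational cohomology of a product of rational Eilenberg--MacLane spaces is the free graded-commutative algebra on the fundamental classes, one gets
\[
H^*(\Omega^\infty_0E;\mathbb{Q})\cong A\!\left(H^*(E;\mathbb{Q})_+\right),
\]
the free graded-commutative $\mathbb{Q}$-algebra on the positive-degree rational cohomology of $E$. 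Because all path components of an infinite loop space are homotopy equivalent and $\Mod$ is connected, restricting to the basepoint component here costs nothing.

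It remains to compute $H^*(\mathbb{C}P^\infty_{-1}\wedge\cp;\mathbb{Q})$ in positive degrees. The spectrum $\mathbb{C}P^\infty_{-1}$ is the Thom spectrum of the virtual bundle $-L$ of real rank $-2$ over $\mathbb{C}P^\infty$, so the Thom isomorphism gives $\widetilde H^*(\mathbb{C}P^\infty_{-1};\mathbb{Q})\cong H^{*+2}(\mathbb{C}P^\infty;\mathbb{Q})$, a copy of $\mathbb{Q}$ in each even degree $\ge-2$; this is exactly $\mathcal{K}$ as a graded vector space, with $k_i$ corresponding to the class in degree $2i$ for $i\ge-1$. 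By the Künneth theorem $\widetilde H^*\bigl(\mathbb{C}P^\infty_{-1}\wedge(\cp)_+;\mathbb{Q}\bigr)\cong\mathcal{K}\otimes H^*(\cp;\mathbb{Q})\cong\mathcal{K}\otimes\mathbb{Q}[c]/c^{n+1}$ (the disjoint basepoint being the one implicit in the Cohen--Madsen statement), whose positive part coincides with $(\mathcal{K}\otimes\mathbb{Q}[c])_+$ throughout the range in which the argument operates. Substituting into the displayed formula and composing with the ring isomorphism of the first paragraph yields $H^*(\Mod;\mathbb{Q})\cong A((\mathcal{K}\otimes\mathbb{Q}[c])_+)$ through degrees $*<(d-2g)(2n-1)$ and $*<(g-5)/2$.

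Each individual step is soft, so the only genuine care required is bookkeeping: one must verify that the zig-zag of the preceding corollary is made of actual maps of spaces, so that every leg is multiplicative on $H^*(-;\mathbb{Q})$ and the resulting isomorphism is one of graded rings (not merely of graded vector spaces) in the asserted range, and that both range hypotheses are inherited correctly. The one point worth flagging is the appearance of the untruncated $\mathbb{Q}[c]$ rather than $H^*(\cp;\mathbb{Q})=\mathbb{Q}[c]/c^{n+1}$: their positive parts first disagree in degree $2n$ (where $k_{-1}\otimes c^{n+1}$ would sit), so the identification---and hence the stated isomorphism---should be understood as holding through the range for which the rest of the argument is valid, and this is the spot I would expect to have to argue rather than merely cite.
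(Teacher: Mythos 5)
Your argument is correct and is exactly the intended one: the paper states this corollary without proof, as the immediate combination of the preceding corollary with the standard rational splitting of the basepoint component of $\Omega^\infty$ of a bounded-below finite-type spectrum into Eilenberg--MacLane spaces, together with the Thom-isomorphism/K\"unneth computation of $H^*({\mathbb{C}P}^\infty_{-1}\wedge(\cp)_+;\mathbb{Q})$. The discrepancy you flag is real but is an imprecision in the paper's statement rather than a gap in your proof: the computation yields $A\bigl((\mathcal{K}\otimes\mathbb{Q}[c]/c^{n+1})_+\bigr)$, so the untruncated $\mathbb{Q}[c]$ should be read as shorthand for $H^*(\cp;\mathbb{Q})$, the two first differing in degree $2n$, which can lie inside the stated range.
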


 \subsection{Variants of the main theorem}
 
There are two easy variations of the main theorem.  See~\textsection\ref{variants} for precise definitions.  
 
 Write $\mathcal{M}^d_{g,k}({\mathbb{C}P}^n)$ for the moduli space of degree $d$ curves in ${\mathbb{C}P}^n$ with $k$ marked points.  Write $\mathcal{MT}^d_{g,k}({\mathbb{C}P}^n)$ for the topological counterpart.

 \begin{theorem}\label{thm:marks}
The standard map
\[
\mathcal{M}^d_{g,k}({\mathbb{C}P}^n) \rightarrow \mathcal{MT}^d_{g,k}({\mathbb{C}P}^n)
\]
induces an isomorphism in $H_q(-)$ for $q<(d-2g)(2n-1)$.
\end{theorem}

Fix a finite collection of marked points $p_1,\dots,p_k\in F_g$ and an equivalence relation $\sim$ on $\{p_i\}$.  Denote this data by $F:=(F_g,\{p_i\},\sim)$.  Denote the cardinality $n(F):=\lvert \{p_i\}_{/\sim}\rvert$ and the number $g(F)$ such that 
\[
\chi((F_g)_{/\sim}) = 2-2g(F) + n(F).
\]
Write $\mathcal{M}^d_{[F]}({\mathbb{C}P}^n)$ for the moduli space of degree $d$ genus $g$ holomorphic marked curves $[(J,h)]$ in ${\mathbb{C}P}^n$ satisfying $h(p_i) = h(p_j)$ when $p_i\sim p_j$.  Write $\mathcal{MT}^d_{[F]}({\mathbb{C}P}^n)$ for the topological counterpart.

\begin{theorem}\label{thm:singular-main-theorem}
The standard map
\[
\mathcal{M}^d_{[F]}(\cp)\rightarrow \mathcal{MT}^d_{[F]}(\cp)\]
induces an isomorphism in $H_q(-)$ for $q<(d-2(g(F)-n(F)+1))(2n-1)$.
\end{theorem}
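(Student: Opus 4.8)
The plan is to reduce Theorem~\ref{thm:singular-main-theorem} to the unmarked Main Theorem (Theorem~\ref{thm:main-theorem}) applied to an auxiliary space, by treating the incidence conditions $h(p_i)=h(p_j)$ as the pullback along a diagonal. First I would form the surface $\overline{F} := (F_g)_{/\sim}$, a (possibly singular) nodal surface whose normalization is $F_g$ and which has $n(F)$ marked points. A holomorphic (or continuous) map $h\colon F_g \to \cp$ satisfying $h(p_i)=h(p_j)$ whenever $p_i\sim p_j$ is exactly a map that factors through $\overline{F}$ topologically; the degree is unchanged, and the "genus" $g(F)$ is defined precisely so that the Euler-characteristic bookkeeping $\chi(\overline{F}) = 2-2g(F)+n(F)$ holds. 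The point of introducing $g(F)$ is that the relevant Segal-type range should be governed by the invariants of $\overline{F}$ rather than of $F_g$; indeed $(d-2g)(2n-1)$ with $g$ replaced by $g(F)-n(F)+1$ matches the excerpt's bound. So the first step is to set up, following~\textsection\ref{variants}, the evaluation fibration $ev\colon \mathcal{MT}^d_{g,k}(\cp) \to (\cp)^{k}$ (or the reduced version over $(\cp)^{n(F)}$), and to identify $\mathcal{MT}^d_{[F]}(\cp)$ and $\mathcal{M}^d_{[F]}(\cp)$ as the (homotopy) pullbacks of $ev$ along the "fat diagonal" $\Delta_\sim\colon (\cp)^{n(F)} \hookrightarrow (\cp)^{k}$ determined by $\sim$, in the topological and holomorphic cases respectively.

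Second, I would establish that these evaluation maps are fibrations (or at least that the relevant squares are homotopy-cartesian): on the topological side this is standard for mapping spaces with the compact-open topology, using that $F_g$ is a manifold and the $p_i$ are disjoint points; on the holomorphic side one needs that evaluation $Hol^d((F_g,J),\cp)\to (\cp)^k$ is a (Serre) fibration uniformly in $J$, which follows from the same transversality/surjectivity-of-evaluation input that underlies Segal's theorem in the range in question, or can be arranged by a Moore-path thickening. Third — and this is the crux — I would run the comparison fiberwise. Over a point $(q_1,\dots,q_{n(F)})$ in the diagonal, the homotopy fiber of $\mathcal{M}^d_{[F]}\to \text{(point)}$ is a moduli space of holomorphic maps from $F_g$ to $\cp$ pinned at the marked points, and likewise for $\mathcal{MT}$; Segal's theorem with fixed-point (pinning) constraints — which can be derived by the scanning argument applied to the complement of the $p_i$, with the effective "genus" and "degree" shifted exactly as in the statement — gives that fiber-level inclusion is a homology isomorphism in the asserted range. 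One then feeds this into a relative Serre spectral sequence (or the Zeeman comparison theorem) for the map of fibrations over the common base $\Delta_\sim((\cp)^{n(F)})$ to conclude $H_q$-isomorphism for $q<(d-2(g(F)-n(F)+1))(2n-1)$.

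The main obstacle I expect is \emph{uniformity of the fiberwise range}: the Serre-spectral-sequence comparison needs the fiber inclusion to be a homology isomorphism through a fixed degree independent of the basepoint, and it needs the base $(\cp)^{n(F)}$ to be nice enough (it is a finite CW complex, so this is fine) but more seriously it needs the holomorphic evaluation to be a genuine fibration — or the square genuinely homotopy-cartesian — so that "restrict to the diagonal" commutes with taking homotopy fibers. If holomorphic evaluation fails to be a fibration off the stable range, one must instead argue directly with the scanning map: apply the configuration-space/scanning model not to $F_g$ but to the punctured surface $F_g\setminus\{p_1,\dots,p_k\}$, track how the incidence conditions translate into prescribed behavior of labels near the punctures, and verify that the resulting bound on the scanning approximation is $(d-2(g(F)-n(F)+1))(2n-1)$ by the same Euler-characteristic count that defines $g(F)$. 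Either way, the numerical bookkeeping that the punctures contribute $+1$ each to the effective $2n-1$ multiplier and that collapsing along $\sim$ changes the effective genus to $g(F)-n(F)+1$ is the one place where I would be careful to get the constants exactly right; everything else is a formal consequence of Theorem~\ref{thm:main-theorem} and the fibration/pullback setup of~\textsection\ref{variants}.
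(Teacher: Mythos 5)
Your route is genuinely different from the paper's. The paper does not reduce to the marked-point case by a pullback along a diagonal: it invokes Segal's theorem for \emph{irreducible singular surfaces} (a holomorphic map from the nodal surface $(F_g)_{/\sim}$ \emph{is} a holomorphic map from the normalization subject to incidence conditions at the marked points), reruns the parametrized Abel/divisor/scanning argument with the complex structure $J\in\mathcal{J}_F$ as an extra variable to obtain a singular-surface analogue of Lemma~\ref{lem:main-lemma}, and then applies the same Borel-construction spectral sequence over $BDiff^+(F)$ as in the proof of Theorem~\ref{thm:main-theorem-1}. (The paper's own proof is only an outline, and by Remark~\ref{error} the whole $\widehat{Div}$ machinery it rests on is flagged as unjustified; any honest write-up of either route inherits that problem.)

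The genuine gap in your proposal is the crux step you yourself flag: on the holomorphic side the evaluation map $\mathcal{J}^d_g(\cp)\to(\cp)^k$ (and its descent to the homotopy quotient) is \emph{not} a fibration, and nothing in the paper or in Segal gives you for free that the relevant square is homotopy cartesian. A Moore-path replacement does not help: it converts the map into a fibration but then you must compare actual fibers (the pinned holomorphic spaces) with homotopy fibers, i.e.\ you must prove that holomorphic evaluation is a homology fibration through the range $(d-2(g(F)-n(F)+1))(2n-1)$, uniformly in $J$ and compatibly with $Diff^+(F)$ --- which is essentially the full strength of the parametrized Segal-with-conditions statement you are trying to deduce, so the reduction is circular as stated. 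Your fallback (``apply scanning to the punctured surface and track how the incidence conditions translate'') is exactly the paper's actual argument, but in your proposal it is only named, not carried out, and all the real content (the divisor-space homology fibration with conditions and the bookkeeping producing the arithmetic genus $g(F)-n(F)+1$) lives there. Two smaller points: $Diff^+(F)$ only preserves the marked points up to $\sim$, so evaluation to $(\cp)^k$ does not descend to the homotopy quotient without passing to the point-fixing subgroup and handling the residual finite symmetry; and the remark that punctures contribute to the ``$2n-1$ multiplier'' is off --- the factor $2n-1$ comes only from the target $\cp$, while the marked points and identifications affect only the effective genus in the factor $d-2(\,\cdot\,)$.
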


\section{Preliminaries}

\subsection{The topology of $\mathcal{J}_g$}\label{sec:topology-of-J}

Write $Fr_2(\mathbb{R}^N):= Emb_{lin}(\mathbb{R}^2,\mathbb{R}^N)$ for the space of linear embeddings, topologized with the compact-open topology.  There is a continuous free action of the orientation preserving general linear group $GL^+_2(\mathbb{R})$ on $Fr_2(\mathbb{R}^N)$ given by precomposition.  The resulting quotient space is the oriented Grassmann $Gr^+_2(\mathbb{R}^N)$ of oriented $2$-planes in $\mathbb{R}^N$.  

Consider the set (space) $\mathcal{J}_{lin}(\mathbb{R}^2)$ of linear endomorphisms $J:\mathbb{R}^2 \to \mathbb{R}^2$ such that $J^2 = -id$ and such that, for $v\neq 0$, the determinant $det(v\mid Jv) >0$.  Topologize $\mathcal{J}_{lin}(\mathbb{R}^2)$ with the compact-open topology.  There is a continuous action of $GL^+_2(\mathbb{R})$ on $\mathcal{J}_{lin}(\mathbb{R}^2)$ given by conjugation $(g,J)\mapsto g\circ J\circ g^{-1}$.  This action is transitive and has isotropy subgroup $GL_1(\mathbb{C})$.  Define the Grassmann $Gr^\mathcal{J}_2(\mathbb{R}^N)$ of  \textit{complex} $2$-planes in $\mathbb{R}^N$ as the orbit space $Fr_2(\mathbb{R}^N) \times_{GL_2(\mathbb{R})} \mathcal{J}_{lin}(\mathbb{R}^2)$.  The map $\mathcal{J}_{lin}(\mathbb{R}^2) \to *$ induces a fibration $Gr_2^\mathcal{J}(\mathbb{R}^N) \to Gr_2^+(\mathbb{R}^N)$.  The fibers of this fibration are homeomorphic to $GL^+_2(\mathbb{R})/GL_1(\mathbb{C})$.  The inclusion $GL_1(\mathbb{C}) \hookrightarrow GL_2^+(\mathbb{R})$ is a deformation retract, and it follows that the fibers of this fibration are contractible.

There are obvious directed systems and a morphism between them
\[
\xymatrix{
\dotsi  \ar[r] 
&
Gr^\mathcal{J}_2(\mathbb{R}^N)  \ar[d]  \ar[r]  
&
Gr^\mathcal{J}_2(\mathbb{R}^{N+2})  \ar[d]  \ar[r]
&
\dots
\\
\dotsi   \ar[r]
&
Gr^+_2(\mathbb{R}^N)  \ar[r]
&
Gr^+_2(\mathbb{R}^{N+2})  \ar[r]
&
\dots ,
}
\]
where the horizontal arrows induced by the inclusions $\mathbb{R}^N \cong \mathbb{R}^N\times\{0\} \hookrightarrow \mathbb{R}^N\times\mathbb{R}^2 =\mathbb{R}^{N+2}$.  Denote the morphism of colimits of these directed systems as $\theta:BU(1) \to BSO(2)$.  This map $\theta$ is a fibration with contractible fibers.

The embedding $F_g\hookrightarrow\mathbb{R}^\infty$ together with the orientation $\sigma$ of $F_g$ determines a map $F_g\xrightarrow{\tau_F}BSO(2)$, given by $p\mapsto (T_pF_g\subset \mathbb{R}^\infty , \sigma_p)$, which classifies the tangent bundle $\tau_{F_g}$ of $F_g$.  An \textit{almost-complex structure} on $F_g$ is a map $J$ in the commutative diagram
\[
\xymatrix{
&
BU(1) \ar[d]^\theta
\\
F_g  \ar[ur]^J  \ar[r]^{\tau_F}
&
BSO(2).
}
\]
Denote by $\mathcal{J}_g\subset Map(F_g, BU(1))$ the subspace of almost-complex structures on $F_g$.  Note that $\mathcal{J}_g$ is in bijection with the set of bundle automorphisms $J:\tau_{F_g} \xrightarrow{\cong} \tau_{F_g}$ such that $J^2 = -id$.  For dimension reasons, the Nijenhuis tensor on $F_g$ will always vanish (see \cite{newlander-nirenberg:complex-almost-complex}).  Therefore $\mathcal{J}_g$ is \textit{equal} to the set (space) of complex structures.  
Finally, because $\theta$ is a fibration with contractible fibers, the space $\mathcal{J}_g$ of complex structures on $F_g = (F_g,\sigma)$ is contractible.

For $d>0$ a positive integer, the topology just defined on $\mathcal{J}_g$ determines the subspace topology on the set $\mathcal{J}^d_g({\mathbb{C}P}^n)$ defined in~(\ref{1}).
Likewise, for $x_0\in F_g$ a base point and $n=1$, we obtain a topology on the subspace 
\[
\mathcal{J}^d_g({\mathbb{C}P}^1)_\ast
:=
\{ (J,h) \mid h(x_0) = \infty \in \mathbb{C}P^1\}
\subset
\mathcal{J}^d_g({\mathbb{C}P}^1)   .
\]
With respect to this topology, the projection 
\begin{equation}\label{2}
\mathcal{J}^d_g({\mathbb{C}P}^1)_\ast
\to 
\mathcal{J}_g
\end{equation}
is continuous.

\subsection{Symmetric products}

Let $X$ be a topological space.  
For each $d\geq 0$ there is a continuous action of the permutation group $\Sigma_d$ on the product $X^{\times d}$ given by permuting the factors.  Define the \textit{$d$-fold symmetric product of $X$} to be the quotient topological space
\[
Sp_d(X):= X^{\times d}/\Sigma_d.
\]
Define the \textit{symmetric product of $X$} as the disjoint union $Sp(X):= \amalg_{d\in\mathbb{N}} Sp_d(X)$.  Given a base point $*\in X$, define $Sp(X,*):= Sp(X)/\sim$ where $\sim$ is the equivalence relation generated by $[(*,x_2,\dots,x_d)]\sim[(x_2,\dots,x_d)]$.

\subsection{Strategy for proving the main theorem}

The argument for the proof of Theorem~\ref{thm:main-theorem} will follow that of Segal's (\cite{segal:scanning}) for when the complex structure is fixed.  Details will be supplied for $n=1$ in which case the target complex manifold is ${\mathbb{C}P}^1 \approx S^2$.  The general situation is not much more difficult as will be outlined later.

The idea is to regard a holomorphic map $(F_g,J)\rightarrow {\mathbb{C}P}^1$ as a rational function on $(F_g,J)$, then to regard a rational function as a pair of divisors $(\eta,\xi)$ given by its zeros and poles.  The degree to which such a pair of divisors is realized in this way from a rational function is described by a theorem of Abel's.   Abel's theorem results in a map from the space of divisors on $(F_g,J)$ to the Jacobian of $(F_g,J)$ whose fiber is the space of rational functions on $(F_g,J)$.  This Jacobian is then identified with a standard torus which is independent of $J$.  The resulting sequence is a \textit{homology fibration through a range}.

Using `scanning maps', there is a comparison of this homology fibration to a homotopy theoretic fibration with fiber $Map(F_g,S^2)$.  These scanning maps are shown to be equivalences from which it follows that the space of pairs $(h,J)$, where $h$ is a rational function on $(F_g,J)$, is homology equivalent to $Map(F_g,S^2)$ through a range.  With sufficient care, one has a family of such constructions parametrized by the space of complex structures.  A simple spectral sequence argument is then in place to have a similar comparison on homotopy quotients by $Diff^+_g$ and the result follows.

\section{Spaces of divisors}

This section is analogous to the discussions in~\textsection3 preceding Proposition~3.1 and that following Proposition~4.5 of~\cite{segal:scanning}; here, the complex structure of $F_g$ is allowed to vary.

Once and for all, choose a base point $x_0\in F_g$.  Write $Sp_d:= Sp_d(F_g\setminus x_0)$ and $Sp:= \coprod_d Sp_d$.  
There is a bijection with the underlying set of $Sp_d$ and the set of positive degree $d$ divisors on $F_g\setminus x_0$.  Define 
\[
Div_d:=\{(\eta,\zeta)\mid \eta\cap\zeta = \emptyset\}\subset Sp_d\times Sp_d
\]
to be the space of pairs $(\eta,\zeta)$ of \textit{disjoint} positive divisors on $F_g \setminus x_0$ of bi-degree $(d,d)$.  
Define 
\[
Div:=\coprod_d Div_d .
\]
The remainder of this section will be devoted to defining a larger space $Div\hookrightarrow \widehat{Div}$ which is better homotopically behaved.

Fix some positive integer $d_0$ then choose a continuous section
\begin{equation}\label{e1}
f\colon 
\mathcal{J}_g 
\longrightarrow
\mathcal{J}^{d_0}_g({\mathbb{C}P}^1)_\ast   
\end{equation}
of the projection~(\ref{2}).
For each $J\in \mathcal{J}_g$, and for each $z\in\mathbb{C}\subset{\mathbb{C}P}^1$, the preimage $f_J^{-1}(z)\subset F_g\setminus \{x_0\}$ is a positive divisor of degree $d_0$.  For each $J\in\mathcal{J}_g$, there is a large number $M_J$ such that for $\lvert z\rvert \geq M_J$, the positive divisor $f_J^{-1}(z)$ consists of $d_0$ \textit{distinct} degree one divisors.  Choose a continuous such assignment $J\mapsto M_J\in [2,\infty)\subset {\mathbb{C}P}^1$.

\begin{remark}\label{error}
The previous paragraph postulates the existence of a certain section $f$ of the projection
$
\mathcal{J}^{d_0}_g({\mathbb{C}P}^1) 
\to
\mathcal{J}_g   ,  
$
even just for large enough $d_0$.
The existence of such a section is not obvious: I have not been able to justify this assertion, and believe it to be wrong.  
This section $f$ is essential for the construction of the topological space $\widehat{Div}$, as it is defined below.  
As this topological space $\widehat{Div}$ is critical for the logic of the main results of this paper, this issue invalidates the main results of this paper.  

\end{remark}

\begin{remark}\label{independent}
This remark records an observation exploited in~\textsection\ref{abel} where the notation is explained.  Because $f_J$ is holomorphic, the integration $I(f_J^{-1}(z)) = I(f_J^{-1}(z')) \in T_J$ are identical for any $z,z'\in \mathbb{C}$.  
\end{remark}

Regard $\mathbb{N} = \{0<1<\dots\}$ as a category in the standard way.  
For $d$ fixed, and for $d_0$ as in~(\ref{e1}), consider the functor $\mathcal{D}:\mathbb{N}\rightarrow \mathsf{Top}_{/\mathcal{J}_g}$ into topological spaces over $\mathcal{J}_g$ given on objects constantly as
\[
\mathcal{D}(n) = \mathcal{J}_g\times Div;
\]
and given on generating morphisms $n< n+1$ as $\iota= \iota_n:\mathcal{D}(n) \to \mathcal{D}(n+1)$ given by
\[
(J,\eta,\zeta)\mapsto (J,\eta + y_J(\eta,\zeta)~,~ \zeta + z_J(\eta,\zeta))
\]
where 
\[
y_J(\eta,\zeta) = f_J^{-1}(1+ max\{M_J , \lvert f_J(x)\rvert \mid (x\in \eta\cup \zeta)\} )
\]
and
\[
z_J(\eta,\zeta) = f_J^{-1}(-1 - max\{M_J , \lvert f_J(x)\rvert \mid (x\in \eta\cup \zeta)\}).
\]
Denote the colimit $\widehat{Div} = colim~ \mathcal{D}$.  
The projection $\widehat{Div}\rightarrow \mathcal{J}_g$ is a fibration.  One can think of a point in the fiber $\widehat{Div}(J)$ over $J\in\mathcal{J}_g$ as a pair of disjoint ``infinite'' positive divisors on the Riemann surface $(F_g\setminus x_0,J)$ whose difference with some pair $(\sum y_n, \sum z_n)$ is a pair of finite divisors.

Denote the subspace
\[
\mathcal{D}_d(n):= (\mathcal{J}_g\times Div_{d+ nd_0})\subset \mathcal{D}(n).
\]
For each $J\in\mathcal{J}_g$, the embedding
$\iota: \mathcal{D}_d(n)\hookrightarrow \mathcal{D}_d(n+1)$ given by $(J,\eta,\zeta)\mapsto (\eta+ y_J(\eta,\zeta) , \zeta + z_J(\eta,\zeta))$ 
has \textit{trivial normal bundle} meaning that it extends to an open embedding over $\mathcal{J}_g$ of $\mathcal{D}_d(n-1)\widetilde{\times} (V_{y_J(\eta,\zeta)}\times V_{z_J(\eta,\zeta)})$ where for $S\subset F_g$ a finite subset, $V_S$ is a sufficiently small tubular neighborhood of $S\subset F_g$.  Denote by $\mathcal{D}\supset \mathcal{D}_d:\mathbb{N}\rightarrow \mathsf{Top}$ the subfunctor given by $n\mapsto \mathcal{D}_d(n)$.  Write 
\[
\widehat{Div}_d \subset \widehat{Div} 
\]
for the component which is the colimit of $\mathcal{D}_d$.

\begin{remark}
The idea for introducing $\widehat{Div}_d =: \widehat{Div}_d(F_g)$ rather than being satisfied with $\mathcal{J}_g\times Div_d$ is that $\widehat{Div}(-)$ behaves better on non-compact arguments.  Namely, restriction maps among $\widehat{Div}(-)$ which one expects to be quasifibrations from Dold-Thom theory are indeed quasifibrations.
\end{remark}

\section{The homology fibration}

\subsection{Phrasing Abel's Theorem}\label{abel}

Let $V_J$ be the space of holomorphic 1-forms on the Riemann surface $(F_g,J)$.  There is the natural inclusion $H_1(F_g;\mathbb{Z})\hookrightarrow {V_J}^*$ as a non-degenerate lattice yielding the $g$-torus 
\[
 T_J := {V_J}^*/H_1(F_g;\mathbb{Z})
\]
known as the \textit{Jacobian variety} of the Riemann surface $(F_g,J)$.

Consider the space of pairs 
\[
\mathcal{J}_g\tilde{\times}T_J  :=  \{(J,v)\mid v\in T_J\}.
\]
Projection onto the first coordinate makes $\mathcal{J}_g\tilde{\times}T_J \rightarrow \mathcal{J}_g$ into a fiber bundle whose fiber over any $J\in\mathcal{J}_g$ is the Jacobian variety $T_J$.

For each $p\in F_g$, choose a piecewise smooth path $\gamma:[0,1]\rightarrow F_g$ from $p$ to $x_0$.  Integration along such $\gamma$ gives a map $I:Div_d\rightarrow T_J$.  Explicitly, given a positive divisor $\eta=\sum m_i p_i$ of $F_g$ and paths $\gamma_i$ from $p_i$ to $x_0$, write $\int_{\gamma_\eta}$ for $\sum m_i\int_{\gamma_i}$.  Define
\begin{equation}\label{formula}
I(\eta,\zeta)=(\alpha\mapsto (\int_{\gamma_\eta}-\int_{\gamma_\zeta}) \alpha).
\end{equation}
Note that the same formula extends $I$ to a map $I:Sp(F_g)\times Sp(F_g)\rightarrow T_J$.

Interpreting Abel's theorem (\cite{griffiths-harris:principles-of-algebraic-geometry}, p. 231, for a general reference), the fiber of $I$ over $0\in T_J$ is the space of degree $d$ meromorphic functions on $(F_g,J)$, that is, $Hol^d_*((F_g,J),{\mathbb{C}P}^1)$.  Here, the $*$ denotes based maps.

\begin{defi}
A map $p:E\rightarrow B$ is a \textit{homology fibration up to degree $k$} if for each $b\in B$ the inclusion $fiber_b(p)\rightarrow hofiber_b(P)$ induces an isomorphism in $H_q(-)$ for $q<k$.  
\end{defi}

\begin{remark}
This definition is similar to that in~\cite{mcduff-segal:group-completion} where they introduce the notion of a \textit{homology fibration}.
\end{remark}

Through out the paper, we will implicitly make use of the following 
\begin{prop}
Let $E\xrightarrow{p}B$ be a continuous map with $B$ paracompact and locally contractible.  Then $p$ is a \textit{homology fibration up to degree $k$} if and only if for each $b\in B$ there is a contractible neighborhood $b\in U\subset B$ such that $p^{-1}(b) \hookrightarrow p^{-1}(U)$ induces an isomorphism in $H_q(-)$ for $q < k$.  
\end{prop}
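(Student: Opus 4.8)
The plan is to treat the two implications separately, in both cases passing to the Hurewicz replacement $\bar{p}\colon\bar{E}\to B$ of $p$: here $\bar{E}$ is the space of pairs $(e,\gamma)$ with $e\in E$ and $\gamma\colon[0,1]\to B$ a path with $\gamma(0)=p(e)$, and $\bar{p}(e,\gamma)=\gamma(1)$. Then $\bar{p}$ is a Hurewicz fibration, the map $\iota\colon E\to\bar{E}$ sending $e$ to $(e,c_{p(e)})$ (the pair given by the constant path) is a map over $B$, the fibre $\bar{p}^{-1}(b)$ is literally $\mathrm{hofiber}_b(p)$, and on fibres $\iota$ restricts to the comparison map $p^{-1}(b)\to\mathrm{hofiber}_b(p)$; so ``$p$ is a homology fibration up to degree $k$'' means exactly that $\iota$ is a fibrewise $H_q$-isomorphism for $q<k$. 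The one elementary input used repeatedly is that the total space of a fibration over a \emph{contractible} base is homotopy equivalent, via the fibre inclusion, to the fibre; applied to the Hurewicz replacement of $p|_{p^{-1}(U)}\colon p^{-1}(U)\to U$ for contractible $U$, it gives $\mathrm{hofiber}_b(p|_{p^{-1}(U)})\simeq p^{-1}(U)$, so for such $U$ the condition ``$p^{-1}(b)\hookrightarrow p^{-1}(U)$ is an $H_q$-isomorphism for $q<k$'' is equivalent to ``$p|_{p^{-1}(U)}\to U$ is a homology fibration up to degree $k$ at the point $b$''. The hypotheses on $B$ serve only to feed the local-to-global machinery: local contractibility supplies contractible neighbourhoods, and paracompactness the numerable covers and partitions of unity needed for Dold's theorem and its homology analogue (cf.\ \cite{mcduff-segal:group-completion}).

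For ($\Rightarrow$): assume $p$ is a homology fibration up to degree $k$, fix $b$, and choose a contractible open $U\ni b$. The restriction of a homology fibration up to degree $k$ to an open subset is again one (this follows from the same machinery), so $p|_{p^{-1}(U)}\to U$ is a homology fibration up to degree $k$; since $U$ is contractible, $\mathrm{hofiber}_b(p|_{p^{-1}(U)})\simeq p^{-1}(U)$ via the fibre inclusion, and the defining $H_q$-isomorphism $p^{-1}(b)\to\mathrm{hofiber}_b(p|_{p^{-1}(U)})$ for $q<k$ is, up to this equivalence, the inclusion $p^{-1}(b)\hookrightarrow p^{-1}(U)$.

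For ($\Leftarrow$), the substantive direction, I would proceed as follows. From the given family $\{U_b\}$ of contractible neighbourhoods, use paracompactness to pass to a numerable refinement $\{W_i\}$, and then run the usual local-to-global argument for homology fibrations: handle a two-set union $W'\cup W''$ by comparing, through degree $k$, the Mayer--Vietoris sequences of the covers $\{p^{-1}(W'),p^{-1}(W'')\}$ of $p^{-1}(W'\cup W'')$ and $\{\bar{p}^{-1}(W'),\bar{p}^{-1}(W'')\}$ of $\bar{p}^{-1}(W'\cup W'')$, then assemble the whole cover by the transfinite induction used in the proof of Dold's theorem, as in \cite{mcduff-segal:group-completion}. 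The conclusion is that $\iota\colon E\to\bar{E}$ is a fibrewise $H_q$-isomorphism for $q<k$, i.e.\ that $p$ is a homology fibration up to degree $k$.

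The step I expect to be the main obstacle -- in fact the place where the statement as literally phrased must be strengthened -- is the input to that induction. The gluing step requires $p|$ to be a homology fibration up to degree $k$ over \emph{each} $W_i$ \emph{and over all of their finite intersections}, which by the contractibility observation of the first paragraph amounts to the inclusion $p^{-1}(b')\hookrightarrow p^{-1}(W)$ being an $H_q$-isomorphism for $q<k$ and \emph{every} point $b'$ of each such $W$; the hypothesis supplies this only at the one distinguished centre of each $U_b$. Passing from ``centre only'' to ``all points, over an intersection-closed cover'' is not formal, since restricting to a sub-neighbourhood need not preserve the $H_q$-isomorphism. In fact the hypothesis as written is too weak: the map $[0,1]\to S^1$, $t\mapsto e^{2\pi i t}$, satisfies it for every $k$ -- over a small arc $U$ the inclusion of the fibre over any point into $p^{-1}(U)$ is a homotopy equivalence -- yet it is not a homology fibration up to degree $1$, because its homotopy fibre is homotopy equivalent to $\mathbb{Z}$ and so is not homology equivalent to the one- or two-point actual fibres. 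So the genuine content of ($\Leftarrow$) is a bootstrapping step producing a basis of contractible open sets $W$ for which $p^{-1}(b')\hookrightarrow p^{-1}(W)$ is an $H_q$-isomorphism for $q<k$ and all $b'\in W$ -- equivalently, reading the hypothesis with ``for all $b'\in U$'' in place of ``for $b$'' -- after which the local-to-global argument above applies verbatim.
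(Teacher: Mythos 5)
Your central observation is correct, and it is the substantive point of comparison with the paper: as literally phrased, the ($\Leftarrow$) direction of the Proposition is false, and your counterexample $p\colon[0,1]\to S^1$, $t\mapsto e^{2\pi i t}$, is valid. For each $b\in S^1$ one can choose a contractible arc $U$ (containing $1$ exactly when $b=1$) for which $p^{-1}(b)\hookrightarrow p^{-1}(U)$ is a homotopy equivalence, so the printed hypothesis holds for every $k$; but $[0,1]$ is contractible, so $\mathrm{hofiber}_b(p)$ is weakly equivalent to $\Omega S^1\simeq\mathbb{Z}$, and already $H_0$ distinguishes it from the one- or two-point fibres, so $p$ is not a homology fibration up to degree $1$. (For an arc $U$ containing $1$ and $b'\in U$ with $b'\neq 1$, the inclusion $p^{-1}(b')\hookrightarrow p^{-1}(U)$ is \emph{not} an $H_0$-isomorphism; this is exactly what your strengthened hypothesis detects and the printed one does not.) The paper offers no independent argument here --- its proof is the single remark that the argument is nearly identical to Proposition~5 of \cite{mcduff-segal:group-completion} --- and the hypotheses of that cited result are the stronger ones you arrive at: the inclusion $p^{-1}(b')\hookrightarrow p^{-1}(U)$ must be a homology equivalence (in the relevant range) for \emph{every} $b'\in U$, for arbitrarily small such $U$, not merely at one distinguished centre. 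So your proposal is best read as a correction of the statement; with the strengthened local hypothesis, your outline (Hurewicz replacement $\bar p\colon\bar E\to B$, the identification over contractible $U$ of the local condition with $p|_{p^{-1}(U)}$ being a homology fibration up to degree $k$, and a numerable-cover, Mayer--Vietoris, Dold-type gluing) is exactly the route of the cited proof, at no less a level of detail than the paper itself supplies.

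Two caveats on your own sketch. In the ($\Rightarrow$) direction you assert that a homology fibration up to degree $k$ restricts to one over the preimage of any open subset ``by the same machinery''; this is not formal, since $\mathrm{hofiber}_b(p|_{p^{-1}(U)})$ is built from paths in $U$ and differs from $\mathrm{hofiber}_b(p)$, and quasifibration-type conditions are notoriously not inherited by restriction, so either supply an argument or restate that direction with the same strengthened hypothesis. In the ($\Leftarrow$) direction, the Mayer--Vietoris comparison of $E$ with $\bar E$ yields in the first instance homology equivalences of total spaces over open sets, and some care (this is where paracompactness and the ``arbitrarily small'' form of the hypothesis enter in \cite{mcduff-segal:group-completion}) is needed to convert this into the fibrewise conclusion $p^{-1}(b)\to\bar p^{-1}(b)$; you leave this at the level of an appeal to the machinery, which is acceptable for a review of the statement but is where the remaining work lies. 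Finally, your distinction is not academic for this paper: the verifications in and around Theorem~\ref{thm:homology-fibration} establish precisely the centre-only condition (an isomorphism for the fibre over the chosen $v$ included into the preimage of a contractible $W$), so invoking the criterion there requires checking the equivalence for every point of $W$, not only for $v$.
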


\begin{proof}
The proof is nearly identical to that of Proposition~5 in~\cite{mcduff-segal:group-completion}.
\end{proof}

Segal shows that  $I: Div_d\rightarrow T_J$ is a \textit{homology fibration up to degree $d-2g$}.  In this paper, we are interested in a similar statement as the complex structure $J$ is allowed to vary.  This is accomplished as follows.

There is a natural inclusion 
$V_J^*\hookrightarrow H_1(F_g;\mathbb{C})$.
This along with the projection $\mathbb{C}\rightarrow\mathbb{R}$ yields the canonical homeomorphism
\[
V_J^*\cong H_1(F_g;\mathbb{R})
\]
which is $H_1(F_g;\mathbb{Z})$-eqivariant.  This results in a canonical homeomorphism of $g$-tori
\[
T_J\cong H_1(F_g;\mathbb{R})/H_1(F_g;\mathbb{Z}).
\]
Denote this standard $g$-torus $H_1(F_g;\mathbb{R})/H_1(F_g;\mathbb{Z})$ by $T_0$.
There results an isomorphism of fiber bundles 
\[
\xymatrix{
\mathcal{J}_g\tilde{\times} T_J \ar[r]^\cong \ar[d]  
&  
\mathcal{J}_g\times T_0 \ar[d] 
\\
\mathcal{J}_g \ar[r]^{id}  
& 
\mathcal{J}_g.
}
\]

Define $\mathcal{J}^d_g({\mathbb{C}P}^1)\subset \mathcal{J}_g\times Map(F_g,\cp)$ to be the subspace consisting of those pairs $(J,h)$ for which $h$ is $J$-holomorphic.  Denote subspace $\mathcal{J}^d_g({\mathbb{C}P}^1)_*\subset \mathcal{J}^d_g({\mathbb{C}P}^1)$ consisting of those pairs $(J,h)$ for which $h$ is a based map.
Using Abel's theorem, the fiber of the composite 
\[
\mathcal{I}: \mathcal{J}_g\times Div_d \xrightarrow{id\times I} \mathcal{J}_g\tilde{\times} T_J \xrightarrow{\cong} \mathcal{J}_g\times T_0 \xrightarrow{pr} T_0 
\]
is the space $\mathcal{J}^d_g({\mathbb{C}P}^1)_*$.  Extend notation and write $\mathcal{I}:\mathcal{J}_g\times Sp\times Sp\rightarrow T_0$ for the map given by the same formula~(\ref{formula}).

\subsection{The Homology Fibration}

The following theorem is the crux of the paper and is the most technical argument presented.  The argument follows that in the proof of Proposition~4.5 of~\cite{segal:scanning}, here, of course, the comlpex structure on $F_g$ is regarded as an additional parameter.

\begin{theorem}\label{thm:homology-fibration}
The map
$\mathcal{I}:\mathcal{J}_g\times Div_d \rightarrow T_0$ is a homology fibration up to degree $d-2g$.  
\end{theorem}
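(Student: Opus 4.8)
The plan is to reduce the statement, via the fibration $\widehat{Div} \to \mathcal{J}_g$ and the fact that the projection $\mathcal{J}_g \times Div_d \to \mathcal{J}_g$ is locally trivial, to a \emph{fiberwise} version of Segal's original theorem (Proposition~4.5 of~\cite{segal:scanning}) and then to globalize over the parameter space $\mathcal{J}_g$. First I would recall the local criterion for homology fibrations supplied by the Proposition above: it suffices to produce, for each $t \in T_0$, a contractible neighborhood $t \in U \subset T_0$ such that $\mathcal{I}^{-1}(t) \hookrightarrow \mathcal{I}^{-1}(U)$ is an isomorphism on $H_q$ for $q < d - 2g$. Because $T_0$ is a compact torus and $\mathcal{I}$ is equivariant under the translation action of the lattice (indeed the whole formula~(\ref{formula}) is affine in the divisor data), it is enough to treat a neighborhood of $0 \in T_0$, whose fiber is precisely $\mathcal{J}^d_g({\mathbb{C}P}^1)_*$ by Abel's theorem.

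The core of the argument is Segal's scanning/stabilization technique, carried out with $J$ as an extra coordinate. I would introduce the stabilized space $\widehat{Div}_d$ from Section~3, noting that the maps $\mathcal{D}_d(n) \hookrightarrow \mathcal{D}_d(n+1)$ have trivial normal bundle (as recorded in the excerpt) so that the colimit $\widehat{Div}_d$ is homotopy equivalent to each $\mathcal{J}_g \times Div_{d + nd_0}$ through a range growing linearly in $n$; more precisely, the inclusion $\mathcal{J}_g \times Div_d \hookrightarrow \widehat{Div}_d$ is $(d-2g)$-connected on homology by a standard general position/excision argument, using that adding the far-away points $y_J,z_J$ does not change low-dimensional homology. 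Meanwhile $\mathcal{I}$ extends to $\widehat{\mathcal{I}}: \widehat{Div}_d \to T_0$ by Remark~\ref{independent} (the values $y_J, z_J$ lie over points $z \in \mathbb{C}$, and $I(f_J^{-1}(z))$ is independent of $z$, hence adding them translates by a \emph{fixed} element of $T_0$ — so in fact one should arrange the stabilization to preserve the $\mathcal{I}$-fiber on the nose, or absorb the translation). I would then show $\widehat{\mathcal{I}}$ is an honest quasifibration: over a contractible $U \ni 0$ one uses the Dold--Thom-style deformation that slides divisor points into $U$-controlled position, the point being that on the \emph{stabilized} space there is always "room" to perform this deformation continuously in $J$. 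Combining, $\mathcal{I}: \mathcal{J}_g \times Div_d \to T_0$ agrees with the quasifibration $\widehat{\mathcal{I}}$ through the range $q < d - 2g$, which is exactly the assertion.

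The step I expect to be the main obstacle is making the scanning deformation continuous and uniform in the parameter $J \in \mathcal{J}_g$. For a fixed $J$ one has Segal's argument verbatim, but as $J$ varies the Riemann surface structure, the meromorphic function $f_J$, the constants $M_J$, and the tubular neighborhoods $V_S$ all move, and one needs the whole family of deformations to assemble into a single continuous homotopy over $\mathcal{J}_g \times T_0$ — this is precisely where the section $f$ of~(\ref{e1}) is invoked to provide a consistent "coordinate at infinity" in each fiber. (Here I note the difficulty flagged in Remark~\ref{error}: the existence of such an $f$ is exactly what is in doubt, and since every part of the construction of $\widehat{Div}$ and hence of this proof passes through $f$, the argument as planned is contingent on resolving that gap.) A secondary technical point is verifying the trivial-normal-bundle/excision estimate gives the sharp range $d - 2g$ rather than something weaker; this requires care about how the degree increments $d + nd_0$ interact with general position in $F_g \setminus x_0$, but it is a routine adaptation of Segal's dimension count once the fiberwise picture is in place. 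Given $f$, I would expect the remaining work to be a faithful but bookkeeping-heavy parametrized version of~\cite[Prop.~4.5]{segal:scanning}.
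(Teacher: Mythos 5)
Your plan inverts the paper's logic, and in doing so it leaves out the actual mechanism that produces the range $d-2g$. The paper proves this theorem \emph{directly}, without any reference to $\widehat{Div}$: one stratifies $P_d=Sp_d\times Sp_d$ by the degree of overlap, $P_{d,k}=\{(\eta,\zeta)\mid deg(\eta\cap\zeta)\geq k\}$, identifies the open strata with $Div_{d-k}\times Sp_k$, and runs a nested downward induction on $k$ comparing the long exact sequences of the pairs over a point $v\in T_0$ and over a contractible neighborhood $W$, using the Thom isomorphism for the normal bundles of the strata and the 5-lemma. The base case $k=d$ is where the genus enters: by Abel's theorem and Riemann--Roch, for $d\geq 2g$ the map $\mathcal{J}\times Sp_d\rightarrow T_0$ is an honest fiber bundle with fiber $\mathcal{J}\times\mathbb{C}^{d-g}$, and likewise for $P_d$. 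Your sketch contains none of this; instead it asserts (a) that $\widehat{\mathcal{I}}:\widehat{Div}_d\rightarrow T_0$ is an honest quasifibration via a ``Dold--Thom-style sliding'' deformation, and (b) that $\mathcal{J}_g\times Div_d\hookrightarrow\widehat{Div}_d$ is $(d-2g)$-connected on homology ``by standard general position/excision.'' Neither is available: (a) is not proved in the paper or in~\cite{segal:scanning} (only a homology fibration \emph{up to degree} $d-2g$ is obtained, and only as a \emph{consequence} of the present theorem together with Theorem~\ref{lem:divisors} and Proposition~3 of~\cite{mcduff-segal:group-completion}), and no deformation argument of the kind you describe can see the genus-dependent bound, which comes from Riemann--Roch, not from room to slide points. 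For (b), even if a stability statement for total spaces held, what your descent actually requires is a comparison of \emph{fibers} of $\mathcal{I}$ and $\widehat{\mathcal{I}}$ --- precisely Theorem~\ref{lem:divisors} --- plus a Zeeman-type spectral sequence comparison over the non-simply-connected base $T_0$; the fiber over $0$ is the space of based meromorphic functions, and general position on $F_g\setminus x_0$ says nothing about which pairs of divisors are linearly equivalent. So the essential content is assumed rather than proved, and the argument is circular relative to the paper's order of deduction.

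Two further concrete problems. First, your reduction ``by lattice/translation equivariance it suffices to treat a neighborhood of $0\in T_0$'' does not work: there is no translation action on $\mathcal{J}_g\times Div_d$ covering translation on $T_0$ (shifting the value of $\mathcal{I}$ would require adding divisor points, which changes the degree), and the fibers over distinct points of $T_0$ are not naturally homeomorphic; the paper simply verifies the local criterion at an arbitrary $v\in T_0$. Second, your route needlessly funnels the proof through $\widehat{Div}$, and hence through the section $f$ of~(\ref{e1}) whose existence is exactly what Remark~\ref{error} puts in doubt; the statement at hand concerns only $\mathcal{I}:\mathcal{J}_g\times Div_d\rightarrow T_0$, and the paper's own proof of it is independent of $f$, so importing that dependency weakens rather than strengthens the argument.
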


\begin{proof}
Through out this proof, we will suppress the subscript $g$ wherever is should appear.

The product $P_d:=Sp_d\times Sp_d$ is stratified by the spaces $P_{d,k}:=\{(\eta,\zeta)\mid deg(\eta \cap \zeta)\geq k\}$:
\[
Sp_d\subset P_{d,d-1}\subset...\subset P_{d,1}\subset P_d = Sp_d\times Sp_d.
\]
Notice that the inclusion $Div_{d-k}\times Sp_k \hookrightarrow P_{d,k}$, given by $((\eta,\zeta),\xi) \mapsto (\eta +\xi , \zeta + \xi)$, is a homeomorphism onto the open stratum $P_{d,k}\setminus P_{d,k+1}$.

Extend the path-integral map $\mathcal{I}$ from above to each $\mathcal{J}\times P_{d,k}$ in the apparent way.  Let $S\subset T_0$  be a subset.  Denote $(\mathcal{J}\times P_{d,k})_S := \mathcal{I}^{-1}(S) \subset \mathcal{J}\times P_{d,k}$.   There is a sequence of inclusions
\[
(\mathcal{J}\times Sp_d)_S \subset (\mathcal{J}\times P_{d,d-1})_S  \subset\dots\subset (\mathcal{J}\times P_{d,1})_S \subset (\mathcal{J}\times P_d)_S.
\]
with $(\mathcal{J}\times P_{d,k})_S \setminus (\mathcal{J}\times P_{d,k+1})_S  \cong (\mathcal{J}\times (Div_{d-k} \times Sp_k))_S$.  For $v\in W \subset T_0$ a contractible neighborhood of $v$, let $j_{-}$ denote any such inclusion $(-)_v\hookrightarrow (-)_W$.  
Unwinding definitions, the proof of the lemma amounts to showing $H_r(j_{\mathcal{J}\times Div_d}) = 0$ when $r< d-2g$.

The inclusion $Div_{d-k}\times Sp_k\hookrightarrow P_{n,k}$ is an open embedding.  The inclusion $P_{d,k}\hookrightarrow P_{d,k-1}$ has normal bundle denoted $\nu_{P_{d,k}}$.  It is thus possible to form the well-behaved normal bundle $\nu_{\mathcal{J}\times P_{d,k}}$ of the embedding $\mathcal{J}\times P_{d,k}\hookrightarrow \mathcal{J}\times P_{d,k-1}$, namely, 
\[
\nu_{\mathcal{J}\times P_{d,k}}:= pr^* \nu_{P_{d,k}}
\]
where $pr:\mathcal{J}\times P_{d,k} \rightarrow P_{d,k}$ is projection onto the second factor.  There is an analogous normal bundle for the two embeddings $(\mathcal{J}\times P_{d,k})_{v,W}\hookrightarrow (\mathcal{J}\times P_{d,k-1})_{v,W}$.

Proceeding by (downward) induction on $d$, assume for $0< k\leq d$ that $H_r(j_{\mathcal{J}\times Div_{d-k}\times Sp_k})=0$ when $r+k\leq d-k-2g$.  That is to say $j_{\mathcal{J}\times Div_{d-k}\times Sp_k}$ induces an isomorphism in $H_r$ for $r<d- k-2g$ and a surjection in $H_{d-k-2g}$.  Refer to this inductive hypothesis as the \textit{primary} inductive hypothesis.  We wish to prove the case $k=0$.

Consider the diagram~(\ref{eq:sequence}) of exact sequences of homology groups associated to the pair 
\[
(\mathcal{J}\times P_{d,k}~,~\mathcal{J}\times (P_{d,k}\setminus P_{d,k+1})) \cong (\mathcal{J}\times P_{d,k}~,~\mathcal{J}\times Div_{d-k}\times Sp_k).  
\]
From the above discussion, the relative term is homotopy equivalent to the Thom space $Th(\nu_{\mathcal{J}\times P_{d,k+1}})$ of the normal bundle.

\begin{equation}\label{eq:sequence}
\xymatrix{
\dotsi \ar[r]
&
H_{r-1}Th(\nu_{(\mathcal{J}\times P_{d,k+1})_v})   \ar[r] \ar[d]^j 
&
H_r(\mathcal{J}\times Div_{d-k}\times Sp_k)_v  \ar[d]^j \ar[r] 
&
H_r(\mathcal{J}\times P_{d,k})_v   \ar[d]^j   \ar[r]  
&
\dotsi
\\
\dotsi \ar[r]
&
H_{r-1}Th(\nu_{(\mathcal{J}\times P_{d,k+1})_W})  \ar[r]  
&
H_r(\mathcal{J}\times Div_{d-k}\times Sp_k)_W  \ar[r] 
&
H_r(\mathcal{J}\times P_{d,k})_W    \ar[r]  
&
\dotsi
}
\end{equation}

Invoke a nested \textit{secondary} (downward) induction argument on $0<k\leq d$ to assume $H_r(j_{\mathcal{J}\times P_{d,l}})=0$ for $r\leq d-l$ with $k<l\leq d$.  For the moment, assume the base case $k=d$ of this secondary induction hypothesis.  Using the primary inductive hypothesis, the Thom isomorphism, and the 5-lemma on~(\ref{eq:sequence}), it follows that $H_r(j_{\mathcal{J}\times P_{d,k}})=0$ for $r\leq d-k$ ($k>0$).  The case $k=d$ is easy enough as outlined by the following two facts.

Firstly, the Riemann-Roch formula and Abel's theorem tells us that, for $d> 2g$, $I:\{J\}\times Sp_d\rightarrow T_0$ is a fiber bundle having fiber the $d-g$ dimensional complex vector space of degree $d$ (based) meromorphic functions on $(F_g,J)$.  Secondly, as we allow variation in $J\in \mathcal{J}\simeq *$, the map $\mathcal{I}: \mathcal{J}\times Sp_d\rightarrow T_0$ is a fiber bundle with fiber identified with the product $\mathcal{J}\times \mathbb{C}^{d-g}$.  These same two facts, imply that $H_r(j_{\mathcal{J}\times P_d})=0$ since $P_d=Sp_d\times Sp_d$. 

Considering diagram~(\ref{eq:sequence}) for $k=0$, the 5-lemma tells us that $H_r(j_{\mathcal{J}\times Div_d})=0$ provided $d\geq 2g$ and $r\leq d-2g$.  This completes the inductive step.

\end{proof}

Let $(J,\eta,\zeta)\in \mathcal{D}_d(n)$.  As observed in Remark~\ref{independent}, the diagram
\begin{equation}\label{fibers}
\xymatrix{
\mathcal{D}_d(n)  \ar[r]^\iota  \ar[d]^{\mathcal{I}}
&
\mathcal{D}_d(n+1)  \ar[d]^{\mathcal{I}}
\\
T_0 \ar[r]^=
&
T_0
}
\end{equation}
commutes.
There results a universal map from the colimit 
\[
\mathcal{I}:\widehat{Div}_d\rightarrow T_0.  
\]

The next task is to prove the following Theorem.  The Theorem and its proof are analogous to Theorem~5.1 of~\cite{segal:scanning} and its proof.  Again, here the complex structure is allowed to vary.
\begin{theorem}\label{lem:divisors}
Let $d,n\in\mathbb{N}$.  The induced map on fibers in the above diagram~(\ref{fibers}) induces an isomorphism in $H_q(-)$ for $q<d-2g$.
\end{theorem}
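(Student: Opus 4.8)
The plan is to deduce Theorem~\ref{lem:divisors} from Theorem~\ref{thm:homology-fibration} by a standard argument comparing homology fibrations, exactly as in Segal's passage from Proposition~4.5 to Theorem~5.1 of~\cite{segal:scanning}, but carried out fiberwise over $\mathcal{J}_g$. First I would observe that the map $\iota\colon\mathcal{D}_d(n)\to\mathcal{D}_d(n+1)$ is, over $\mathcal{J}_g$, an open embedding with trivial normal bundle (as noted in the construction of $\widehat{Div}$), so one has a Thom space / excision description of the relative homology. I would then combine this with the fact, supplied by Theorem~\ref{thm:homology-fibration}, that both $\mathcal{I}\colon\mathcal{D}_d(n)=\mathcal{J}_g\times Div_{d+nd_0}\to T_0$ and $\mathcal{I}\colon\mathcal{D}_d(n+1)\to T_0$ are homology fibrations up to the relevant degree. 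The key numerical point is that adding $d_0$ to the degree only \emph{increases} the range $d-2g$ in which Theorem~\ref{thm:homology-fibration} applies, so for all $n\geq 0$ the total-space-to-base map restricts to a homology isomorphism through degree $d-2g$ on the actual fiber.

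The key steps, in order: (1) For a contractible neighborhood $W\ni v$ in $T_0$, write down the long exact sequences of the pairs $\bigl(\mathcal{I}^{-1}(v)\cap\mathcal{D}_d(n+1),\ \mathcal{I}^{-1}(v)\cap\mathcal{D}_d(n)\bigr)$ and the corresponding pair over $W$, and identify the relative terms via the trivial normal bundle of $\iota$ with a suspension-type term involving the tubular neighborhoods $V_{y_J(\eta,\zeta)}\times V_{z_J(\eta,\zeta)}$, uniformly in $J$. (2) Use Theorem~\ref{thm:homology-fibration} twice — once in degree $d$, once in degree $d+d_0$ — to conclude that the vertical comparison maps from the $v$-sequence to the $W$-sequence are isomorphisms on the total-space terms through degree $d-2g$ (and one degree higher for the $n+1$ term, since its range is $d+d_0-2g$). (3) Apply the five lemma to the ladder of long exact sequences to get that $\mathcal{I}^{-1}(v)\cap\mathcal{D}_d(n)\hookrightarrow\mathcal{I}^{-1}(v)\cap\mathcal{D}_d(n+1)$ is a homology isomorphism through degree $d-2g$, after re-expressing everything in terms of $hofiber$ via the homology-fibration property. (4) Pass to the colimit over $n$: since homology commutes with directed colimits of spaces along cofibrations (the inclusions $\iota$ can be arranged to be cofibrations over $\mathcal{J}_g$), $H_q\bigl(\mathcal{I}^{-1}(v)\cap\widehat{Div}_d\bigr)\cong\operatorname{colim}_n H_q\bigl(\mathcal{I}^{-1}(v)\cap\mathcal{D}_d(n)\bigr)$, and each structure map in the colimit system is an iso through degree $d-2g$, hence so is the map from any finite stage — which is the assertion of the theorem.

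The main obstacle I anticipate is step (1)–(2): making the normal-bundle / excision identification genuinely \emph{uniform over $\mathcal{J}_g$ and over the contractible base neighborhood $W\subset T_0$} simultaneously, so that the five-lemma ladder is honestly a ladder. Concretely, one must know that the open embedding $\mathcal{D}_d(n)\hookrightarrow\mathcal{D}_d(n+1)$ and its fiberwise tubular neighborhood behave compatibly with the stratification used in the proof of Theorem~\ref{thm:homology-fibration}, and that restricting to $\mathcal{I}^{-1}(W)$ does not destroy the Thom-space description — this is the same delicate point that makes Theorem~\ref{thm:homology-fibration} technical, now inherited one level up. A secondary nuisance is verifying that the maps $\iota_n$ can be taken to be cofibrations (not merely open embeddings) so that the colimit argument in step (4) is valid; this should follow from the tubular-neighborhood structure by a standard mapping-cylinder replacement, but it must be checked. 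Everything else — the five lemma, the Thom isomorphism, commuting homology with filtered colimits — is routine once the uniform geometric setup is in place. (Of course, per Remark~\ref{error}, the very existence of the section $f$ underlying $\widehat{Div}$ is in doubt, so this argument is conditional on resolving that issue.)
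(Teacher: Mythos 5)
Your reduction cannot work as stated: the conclusion of Theorem~\ref{lem:divisors} is not a formal consequence of Theorem~\ref{thm:homology-fibration}, and your five-lemma step is where this surfaces. In the ladder comparing the long exact sequences of the pairs over $v$ and over $W$, the homology-fibration property (applied in degrees $d+nd_0$ and $d+(n+1)d_0$) makes the \emph{vertical} maps on the absolute terms isomorphisms in the range; the five lemma then tells you that the vertical maps on the \emph{relative} terms are isomorphisms, and nothing more. To extract the horizontal conclusion --- that $\mathcal{I}^{-1}(v)\cap\mathcal{D}_d(n)\rightarrow\mathcal{I}^{-1}(v)\cap\mathcal{D}_d(n+1)$ is a homology isomorphism --- you would need to know that the corresponding horizontal map over $W$ is one, which is exactly as hard as what you set out to prove (equivalently, that the relative homology over $W$ vanishes in the range; nothing you invoke gives this). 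More fundamentally, no argument using only the fact that both maps $\mathcal{I}$ are homology fibrations up to a degree can suffice: if $E\rightarrow B$ is a trivial bundle with fiber $F$ and $s\colon B\rightarrow E$ is a section, both $B\rightarrow B$ and $E\rightarrow B$ are fibrations and $s$ is a map over $B$ (even an embedding with trivial normal bundle when $F$ is a manifold), yet on fibers it is $\ast\rightarrow F$. Your ``key numerical point'' that the range improves with the degree is beside the point.

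What is missing is an independent stability input: that the stabilization map $\iota$, which adds the points $y_J(\eta,\zeta)$ and $z_J(\eta,\zeta)$ to the two divisors, induces an isomorphism in $H_q$ for $q<d-2g$. This is the actual content of the paper's proof. There, one first reduces the fiberwise statement to this statement about $\iota$ on total spaces, using the Zeeman comparison theorem \cite{zeeman:comparison} for the Leray--Serre spectral sequences of the two maps to $T_0$ (the base map being the identity), together with Theorem~\ref{thm:homology-fibration} to pass between fibers and homotopy fibers. One then proves the stabilization statement not formally but by re-running the stratified nested induction from the proof of Theorem~\ref{thm:homology-fibration}: compare the long exact sequences of the pairs coming from the stratification $Sp_d\subset P_{d,d-1}\subset\cdots\subset P_d$ in degree $d$ with those in the stabilized degree $d+d_0$, using the Thom isomorphism for the normal bundles of the strata and the five lemma, the base case (the deepest stratum) being supplied by Abel's theorem and Riemann--Roch: for $d\geq 2g$ both $\mathcal{J}_g\times Sp_d\rightarrow T_0$ and $\mathcal{J}_g\times Sp_d\times Sp_d\rightarrow T_0$ are fiber bundles whose fibers are products of $\mathcal{J}_g$ with complex vector spaces, hence contractible, so the comparison is trivial there. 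Your steps (1) and (4) --- the tubular-neighborhood/cofibration bookkeeping and the passage to the colimit --- are fine but peripheral; the stratification induction is the heart of the matter and is absent from your proposal. (Your closing caveat about Remark~\ref{error} is correct: all of this is conditional on the existence of the section $f$.)
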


\begin{proof}
To prove Theorem~\ref{lem:divisors} it will suffice to show $\iota: \mathcal{D}_d(n) \rightarrow \mathcal{D}_d(n+1)$ induces an isomorphism in $H_q(-)$ for $q<d-2g$.  This is indeed sufficient using the Zeeman comparison theorem (\cite{zeeman:comparison}) for the apparent Leray-Serre spectral sequences (see~\cite{mccleary:spectral-sequences},~\textsection5).  Recall that the canonical inclusion $\mathcal{D}_d(n) \hookrightarrow \mathcal{J}_g\times Div_{d+nd_0}$ is a homotopy equivalence.  Choose once and for all a homotopy inverse $\mathcal{J}_d\times Div_{d+nd_0}\rightarrow \mathcal{D}_d(n)$ for each $n\in\mathbb{N}$ and denote the resulting map again as
\begin{equation}\label{iota}
\iota: \mathcal{J}_g\times Div_d\rightarrow\mathcal{J}_g\times Div_{d+nd_0}
\end{equation}

It turns out that most of the work has already been done.  As in the proof of Theorem~\ref{thm:homology-fibration}, consider the sequence of closed inclusions
\begin{equation}\label{stratification}
(\mathcal{J}_g\times Sp_d)\subset (\mathcal{J}_g\times P_{d,d-1})\subset...\subset (\mathcal{J}_g\times P_{d,1})\subset (\mathcal{J}_g\times P_d) = (\mathcal{J}_g\times Sp_d\times Sp_d).
\end{equation}
In the natural way, extend the map $\iota$ in~(\ref{iota}) to a map to a map of sequences~(\ref{stratification}).
In the proof of Theorem~\ref{thm:homology-fibration}, we compared the $H_*$-long exact sequences for the pairs $((P_{d,k})_v,(Div_{d-k}\times Sp_k)_v)$ and $((P_{d,k})_W,(Div_{d-k}\times Sp_k)_W)$.  Here we compare the $H_*$-long exact sequences of the pairs $((P_{d,k})_v,(Div_{d-k}\times Sp_k)_v)$ and $((P_{d+1,k})_v,(Div_{d+1-k}\times Sp_k)_v)$.  The appropriate nested induction argument here is nearly identical to that of Theorem~\ref{thm:homology-fibration}.  Again, the essential fact comes from Abel's theorem which is that both $\mathcal{J}_g\times Sp_d \rightarrow T_0$ and $\mathcal{J}_g\times Sp_d\times Sp_d\rightarrow T_0$ are fiber bundles for $d\geq 2g$.  Details are left to the interested reader.
\end{proof}

\begin{cor}
The integration map
\[
\mathcal{I}: \widehat{Div}_d  \rightarrow T_0
\]
is a homology fibration up to degree $d-2g$.
\end{cor}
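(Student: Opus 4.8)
The plan is to reduce the statement, via the local criterion for homology fibrations (the Proposition above), to a stagewise application of Theorem~\ref{thm:homology-fibration} followed by passage to the colimit that defines $\widehat{Div}_d$. Since $T_0$ is a compact manifold, hence paracompact and locally contractible, it suffices to exhibit, for each $v\in T_0$, a contractible open neighborhood $W\ni v$ for which the inclusion of ``fibers'' $(\widehat{Div}_d)_v\hookrightarrow(\widehat{Div}_d)_W$ induces an isomorphism on $H_q$ for $q<d-2g$; here, as in the proofs above, $(-)_S$ denotes $\mathcal{I}^{-1}(S)$.

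First I would fix $v$ and take $W$ to be any contractible open neighborhood of it. The key point --- and essentially the only thing that is not bookkeeping --- is that the proof of Theorem~\ref{thm:homology-fibration} uses nothing about the relevant neighborhood beyond its contractibility: the bundle inputs invoked there (that $\mathcal{J}_g\times Sp_m\to T_0$ and $\mathcal{J}_g\times Sp_m\times Sp_m\to T_0$ are fiber bundles for $m\ge 2g$) restrict over any contractible base to a space that deformation retracts onto a fiber. Hence that proof applies verbatim with $d$ replaced by any $m\ge 2g$, and with this fixed $W$, to give that $(\mathcal{J}_g\times Div_m)_v\hookrightarrow(\mathcal{J}_g\times Div_m)_W$ is an isomorphism on $H_q$ for $q<m-2g$. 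Recalling that $\mathcal{D}_d(n)$ is, compatibly with $\mathcal{I}$, homotopy equivalent to $\mathcal{J}_g\times Div_{d+nd_0}$, and that $d+nd_0-2g\ge d-2g$, I conclude that for every $n$ the inclusion $(\mathcal{D}_d(n))_v\hookrightarrow(\mathcal{D}_d(n))_W$ is an isomorphism on $H_q$ for $q<d-2g$, with one and the same $W$ serving every stage.

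It remains to pass to the colimit. Since $\{v\}$ is closed and $W$ is open in $T_0$, and $\widehat{Div}_d=\operatorname*{colim}_n\mathcal{D}_d(n)$ along the cofibrations $\iota_n$ (which preserve $\mathcal{I}$ by diagram~(\ref{fibers})), the subspaces $(\widehat{Div}_d)_v$ and $(\widehat{Div}_d)_W$ carry the colimit topologies of their intersections with the stages; that is, they are $\operatorname*{colim}_n(\mathcal{D}_d(n))_v$ and $\operatorname*{colim}_n(\mathcal{D}_d(n))_W$. As the $\iota_n$ are cofibrations and every compact subset of such a colimit lies in a finite stage, singular homology commutes with these colimits, so on $H_q$ the inclusion $(\widehat{Div}_d)_v\hookrightarrow(\widehat{Div}_d)_W$ is the colimit of the maps $H_q((\mathcal{D}_d(n))_v)\to H_q((\mathcal{D}_d(n))_W)$, each an isomorphism for $q<d-2g$ by the previous paragraph, hence itself an isomorphism for $q<d-2g$. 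The Proposition then yields the corollary. I expect the main obstacle to be exactly the uniformity asserted in the second paragraph: once one checks that the argument for Theorem~\ref{thm:homology-fibration} is insensitive to the particular contractible neighborhood, the rest is routine colimit bookkeeping. (One may, if desired, feed Theorem~\ref{lem:divisors} into the same colimit to further identify $H_q((\widehat{Div}_d)_v)$ with $H_q((\mathcal{J}_g\times Div_d)_v)$ in this range --- the form in which the corollary is applied downstream.)
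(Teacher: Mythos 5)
Your argument is correct at the paper's level of rigor, but it carries out the passage to the colimit differently from the paper. The paper's own proof is a two-sentence citation: each stage $\mathcal{D}_d(n)\to T_0$ is a homology fibration up to degree $d+nd_0-2g$ by Theorem~\ref{thm:homology-fibration}, Proposition~3 of \cite{mcduff-segal:group-completion} is invoked to conclude that the induced map of mapping telescopes is a homology fibration up to degree $d-2g$, and the telescope is then identified with $\widehat{Div}_d$ because each $\iota_n$ extends to an open inclusion of a trivial bundle and is hence a cofibration. You instead verify the local criterion (the Proposition in~\textsection\ref{abel}) directly for the colimit, and you correctly isolate the one place where this needs more than the bare statement of Theorem~\ref{thm:homology-fibration}: that statement only furnishes, for each stage, \emph{some} contractible neighborhood of $v$, while your colimit argument needs one $W$ serving all stages simultaneously. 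Your resolution --- that the proof of Theorem~\ref{thm:homology-fibration} works for an arbitrary contractible open $W$, since its base cases are fiber bundles (so trivializable over any contractible open, hence paracompact, subset of $T_0$) and its inductive steps (Thom isomorphism, five lemma) never shrink the neighborhood --- is sound; in a write-up one should either restate the theorem in this stronger ``for every contractible open $W$'' form or fall back on the McDuff--Segal citation as the paper does. Two smaller points: $\mathcal{D}_d(n)$ is by definition \emph{equal} to $\mathcal{J}_g\times Div_{d+nd_0}$ as a space over $T_0$, with compatibility of the structure maps with $\mathcal{I}$ being exactly diagram~(\ref{fibers}) and Remark~\ref{independent}, so no homotopy inverse need be chosen there; and your route makes explicit that Theorem~\ref{lem:divisors} is not needed for this corollary, since the stagewise range $d+nd_0-2g$ only improves with $n$ --- a clarification of the logical dependencies that the paper's citation-style proof leaves implicit. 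The remaining colimit bookkeeping (closedness of $\{v\}$, openness of $W$, compact subsets lying in finite stages of a colimit along the closed cofibrations $\iota_n$) is at the same level of care as the paper's own cofibration remark.
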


\begin{proof}

It follows from Theorem~\ref{thm:homology-fibration}  and Proposition 3 of~\cite{mcduff-segal:group-completion} that the resulting map of telescopes 
\[
\mathcal{I}: (hocolim ~\mathcal{D}_d) \rightarrow hocolim(T_0\xrightarrow{=} T_0\xrightarrow{=}\dots)
\]
is a homology fibration up to degree $d-2g$.  The claim follows from the observation that $\mathcal{D}(n)\hookrightarrow \mathcal{D}(n+1)$ extends to an open inclusion of a trivial bundle over $\mathcal{D}(n)$, and is thus a cofibration.  
\end{proof}

\section{Comparing Fibration Sequences}

\subsection{The comparison fibration}

This subsection is an exposition of \textsection2 of~\cite{segal:scanning}.

Theorem~\ref{thm:homology-fibration} thus gives the sequence
\begin{equation}\label{eq:homology-fibration} 
\mathcal{J}^d_g({\mathbb{C}P}^n)_* \rightarrow \mathcal{J}_g\times Div_d \xrightarrow{\mathcal{I}} T_0
\end{equation}
as a homology fibration up to degree $d-2g$.  The idea now is to compare this sequence with a homotopy theoretic sequence whose fiber is $\mathcal{J}_g\times Map^d_*(F_g,S^2)$.  For this, consider the (homotopy) fibration sequence 
\begin{equation}\label{eq:fibration}
S^2\rightarrow {\mathbb{C}P}^\infty\vee{\mathbb{C}P}^\infty\rightarrow{\mathbb{C}P}^\infty
\end{equation}
which is defined as follows.

The topological group $S^1$ acts on $S^2$ by rotating $S^2$ fixing the north and south poles.  Associated to this $S^1$-action there is a fibration sequence 
\begin{equation}\label{S-fibration}
S^2\rightarrow ES^1\times_{S^1} S^2 \rightarrow BS^1.
\end{equation}
Regard the total space of this fiber bundle as a union of two disk bundles, one corresponding to the northern hemisphere of $S^2$, the other to the southern; the two disk bundles are glued together along their fiber-wise boundary $S^1$-bundle. The total space of each disk-bundle is homotopy equivalent to $BS^1$.  The equatorial $S^1$-bundle is a model for $ES^1$ and is thus contractible.  The fibration seuqnce~(\ref{S-fibration}) then becomes the (homotopy) fibration sequence
\begin{equation}\label{topological-fibration}
S^2\rightarrow BS^1\vee BS^1 \rightarrow BS^1.
\end{equation}
Lastly, recall that ${\mathbb{C}P}^\infty$ is a model for $BS^1$.

\subsection{The Scanning Map}

Our goal now is to define a `scanning map' 
\[
S:\widehat{Div} \rightarrow \mathcal{J}\times Map_*(F_g,{\mathbb{C}P}^\infty\vee {\mathbb{C}P}^\infty).
\]

Regard ${\mathbb{C}P}^\infty$ as the projectivization of the vector space $\mathbb{C}[z]$.  There is a map $\mathbb{C}[z]\rightarrow Sp(S^2,\infty)$ which sends a polynomial to its roots.  This map descends to a homeomorphism ${\mathbb{C}P}^\infty\cong Sp(S^2,\infty)$.

Write $B_\epsilon(0)\subset \mathbb{C}$ for the $\epsilon$-neighborhood of $0\in \mathbb{C}$.  Once and for all, choose a continuous family of diffeomorphisms $\phi_\epsilon:B_\epsilon(0) \xrightarrow{\cong} \mathbb{C}$ which fix a neighborhood of the origin.  This induces a continuous family of diffeomorphisms $\phi_\epsilon^*:(B_\epsilon(0)^*,*)\xrightarrow{\cong}(S^2,\infty)$ from the one-point compactifications.

Once and for all, fix a parallelization of $F_g\setminus x_0$ and a continuous family of Riemannian metric on $F_g\setminus x_0$ parametrized by $\mathcal{J}_g$.  Assume for each $J\in\mathcal{J}_g$ that the metric is such that the injectivity radius of $F_g\setminus x_0$ is bounded away from $0$.  That is, there is an $\epsilon_J>0$ such that for each $p\in F_g\setminus x_0$, an $\epsilon_J$-neighborhood of $p$ is a convex ball.  For $\epsilon>0$ as so, it follows that the exponential map
\[
exp_p:B_{\epsilon}(0)\rightarrow F_g \setminus x_0
\]
is an embedding where $B_\epsilon(0)\subset \mathbb{C}$ is an $\epsilon$-neighborhood of $0\in \mathbb{C}$.  Assume further that this family of metrics is chosen so that for each $J\in \mathcal{J}_g$ the distances are bounded below 
\begin{equation}\label{epsilon}
\epsilon_J< \text{inf}_{i,j\in\mathbb{N}}\{dist(y_J^n(\eta,\zeta),z_J^n(\eta,\zeta))\}
\end{equation}
for all $(\eta,\zeta)\in Div_d$ and $n\in\mathbb{N}$.

Fix a continuous family $\epsilon:\mathcal{J}_g\rightarrow(0,\infty)$ as in the above paragraph.  Denote by $\mathcal{D}^\epsilon\subset \mathcal{D}$ the subfunctor with $\mathcal{D}^\epsilon(n)\subset \mathcal{D}(n)$ the subspace consisting of those triples $(J,\eta,\zeta)$ for which 
\[
\emptyset = (\bigcup_{u\in \eta} B_{\epsilon_J}(u)) \cap  (\bigcup_{v\in \zeta} B_{\epsilon_J}(v)) \subset F_g\setminus x_0.
\]
Because of condition~(\ref{epsilon}), this condition is indeed preserved under the structure maps of the diagram $\mathcal{D}$.  Each of the inclusions $\mathcal{D}^\epsilon(n)\hookrightarrow\mathcal{D}(n)$ in addition to the inclusion $\widehat{Div}^\epsilon \hookrightarrow \widehat{Div}$ is a homotopy equivalence.

Let $n\in\mathbb{N}$.  There is a `scanning map'
\[
S^\epsilon_n:\mathcal{D}^\epsilon(n) \rightarrow Map_*(F_g,{\mathbb{C}P}^\infty\vee{\mathbb{C}P}^\infty)
\]
whose adjoint is given as 
\[
((J,\eta,\zeta),p)\mapsto (exp_p^{-1}(\eta))\vee (exp_p^{-1}(\zeta)),
\]
for $p\neq x_0$,
where $ - $ denotes subtraction in the twice delooped $B^2\mathbb{Z} \cong {\mathbb{C}P}^\infty \cong_{\phi_\epsilon} Sp(B_\epsilon(0)^*,*)$; and for $p=x_0$ by the assignment $((J,\eta,\zeta),x_0)\mapsto \infty\in {\mathbb{C}P}^\infty\vee {\mathbb{C}P}^\infty$ to the base point.  By construction the following diagram commutes
\begin{equation}\label{scan}
\xymatrix{
\mathcal{D}^\epsilon(n)   \ar[rr]   \ar[dr]^{S^\epsilon_n}
&
&
\mathcal{D}^\epsilon(n+1)\ar[dl]^{S^\epsilon_{n+1}}
\\
&
Map_*(F_g,{\mathbb{C}P}^\infty\vee{\mathbb{C}P}^\infty).
&
}
\end{equation}
This gives the data of a map from the colimit
\[
S^\epsilon:\widehat{Div}^\epsilon \rightarrow Map_*(F_g,{\mathbb{C}P}^\infty\vee{\mathbb{C}P}^\infty).
\]
Refer to this map also as a `scanning map'.  Upon the (contractible) choice of a homotopy inverse, from the zig-zag
\[
\widehat{Div}\xleftarrow{\simeq} \widehat{Div}^\epsilon \xrightarrow{S^\epsilon} Map_*(F_g,{\mathbb{C}P}^\infty\vee{\mathbb{C}P}^\infty)
\]
there results a map $\widehat{Div} \xrightarrow{S} Map_*(F_g,{\mathbb{C}P}^\infty\vee{\mathbb{C}P}^\infty)$, referred to as the \textit{scanning map}.  It is straightforward to verify that the homotopy class of $S$ is independent of the choice of the family $\epsilon_J>0$.  Think of $S$ as scanning $F_g$ for either $\eta$ or $\zeta$ with a very zoomed microscope through which, via the parallelization, $F_g$ looks like $\mathbb{C}$.

For $J\in\mathcal{J}_g$ a complex structure, denote by $\widehat{Div}(J)\subset \widehat{Div}$ the fiber over $J$ of the projection map $\widehat{Div}\rightarrow\mathcal{J}_g$.  

\begin{theorem}[Segal (\cite{segal:scanning}~\textsection4)]\label{thm:scanning}
The the scanning map restricts to a homotopy equivalence $S: \widehat{Div}(J) \rightarrow Map_*(F_g, {\mathbb{C}P}^\infty\vee {\mathbb{C}P}^\infty)$.
\end{theorem}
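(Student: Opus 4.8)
The plan is to carry out Segal's argument from \textsection4 of \cite{segal:scanning}; the complex structure $J$ being fixed, it plays no role and one works entirely inside the Riemann surface $(F_g,J)$. First I would promote the scanning map to a natural transformation of functors on the poset of open subsets of $F_g\setminus x_0$: for open $U\subseteq F_g\setminus x_0$ let $\widehat{Div}^\epsilon(U)\subseteq\widehat{Div}^\epsilon(J)$ be the (stabilized, $\epsilon$-separated) pairs of disjoint divisors supported in $U$, and let $\Gamma_c(U)$ be the space of pointed maps from the one-point compactification $U^+:=U\cup\{\infty\}$ to ${\mathbb{C}P}^\infty\vee{\mathbb{C}P}^\infty$; the fixed parallelization identifies $\Gamma_c(U)$ with a space of compactly supported sections. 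The scanning construction above gives maps $S_U\colon\widehat{Div}^\epsilon(U)\to\Gamma_c(U)$ compatible with restriction to smaller opens, and for $U=F_g\setminus x_0$ --- whose one-point compactification is $F_g$ --- this is, after the homotopy equivalence $\widehat{Div}^\epsilon\simeq\widehat{Div}$, the map of the theorem.

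Next I would settle the local case $U\cong\mathbb{R}^2$, where $\Gamma_c(\mathbb{R}^2)=\Omega^2({\mathbb{C}P}^\infty\vee{\mathbb{C}P}^\infty)$ and $\widehat{Div}^\epsilon(\mathbb{R}^2)$ is the stabilized space of $\epsilon$-separated disjoint divisor pairs in the plane. Each of $\eta$ and $\zeta$ separately scans, by the Dold--Thom theorem, into a copy of ${\mathbb{C}P}^\infty\cong Sp(S^2,\infty)$, and the disjointness of the pair is precisely what makes the group completion of the monoid of such pairs model $\Omega^2$ of the \emph{wedge} rather than of the product. I would deduce that $S_{\mathbb{R}^2}$ is an equivalence from the planar case of Segal's scanning theorem \cite{segal:scanning} together with the group-completion theorem of \cite{mcduff-segal:group-completion} --- or, alternatively, by comparing $\widehat{Div}^\epsilon(\mathbb{R}^2)\to\Gamma_c(\mathbb{R}^2)$ with the comparison fibration~(\ref{eq:fibration}) looped twice and invoking the known planar scanning statements for a single divisor space and for $\Omega^2 S^2$. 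Either way the base case is built from Euclidean configuration spaces and is routine.

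The remainder is an induction over a finite decomposition of $F_g\setminus x_0$ into disks (a handle decomposition, or the standard open cover of a punctured surface). If $U=U_1\cup U_2$ with $U_1$, $U_2$, $U_1\cap U_2$ each a disjoint union of disks --- or, at later stages, opens for which $S_{(-)}$ is already known to be an equivalence --- choose a slight interior shrink $U_1'\Subset U_1$ and compare the restriction map $\widehat{Div}^\epsilon(U)\to\widehat{Div}^\epsilon(U_1')$ with the restriction fibration $\Gamma_c(U)\to\Gamma_c(U_1')$, whose fibre is $\Gamma_c(U\setminus\overline{U_1'})$. Granting that the first restriction is a quasifibration with fibre $\widehat{Div}^\epsilon(U\setminus\overline{U_1'})$, scanning gives a map of (quasi)fibration sequences; the five-lemma on the long exact homotopy sequences, plus the inductive hypothesis for $U_1'$ and for the fibre (a divisor space on the complementary open), forces $S_U$ to be a weak equivalence. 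The induction then makes $S_{F_g\setminus x_0}$ a weak equivalence, hence --- all spaces having CW homotopy type --- a homotopy equivalence.

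The main obstacle is the quasifibration claim used in the induction: that the restriction maps of the \emph{stabilized} divisor-pair spaces $\widehat{Div}^\epsilon(-)$ satisfy the Dold--Thom criterion, uniformly in the local geometry coming from the chosen family of metrics. This is the analogue of Segal's Proposition~4.5 and is exactly why the enlargement $Div\hookrightarrow\widehat{Div}$ was introduced: on the unstabilized spaces $Div_d$ the restriction maps fail to be quasifibrations, whereas the colimit along the maps $\iota_n$ --- which add a pair of far-out divisors clustering near $x_0$, i.e.\ near the basepoint of the scanning target --- repairs this without changing the scanning map up to homotopy. Everything else is formal manipulation of (quasi)fibration sequences together with the Euclidean base case.
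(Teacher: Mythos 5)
Your outline is essentially the argument the paper relies on: the paper offers no proof of this statement, citing Segal's \textsection4 directly, and your localization of scanning to compactly supported sections, the Euclidean base case via Dold--Thom/group completion, and the patching induction hinging on the quasifibration property of restriction maps for the stabilized spaces $\widehat{Div}^\epsilon(-)$ (the very reason $Div\hookrightarrow\widehat{Div}$ is introduced, as the paper's remark notes) is exactly Segal's proof. The points you flag as remaining work (the Dold--Thom criterion for the restriction maps, and the care needed with stabilization on proper open subsets) are precisely the technical content of Segal's Proposition~4.5--4.6, so the proposal is correct and takes the same route.
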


\begin{cor}\label{scan-equivalence}

The scanning map 
\[
S:\widehat{Div}\rightarrow \mathcal{J}\times Map_*(F_g,{\mathbb{C}P}^\infty\vee{\mathbb{C}P}^\infty)
\]
homotopy equivalence.  

\end{cor}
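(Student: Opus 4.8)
The plan is to recognise $S$ as a map over $\mathcal{J}_g$ that restricts to a homotopy equivalence on each fibre, and then to promote this to a homotopy equivalence of total spaces by Dold's fibrewise homotopy equivalence criterion.

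First I would check that the whole zig-zag defining $S$ lives over $\mathcal{J}_g$. By construction each $S^\epsilon_n$ sends $(J,\eta,\zeta)$ to a pair whose first coordinate is $J$, so $S^\epsilon\colon\widehat{Div}^\epsilon\to\mathcal{J}_g\times Map_*(F_g,{\mathbb{C}P}^\infty\vee{\mathbb{C}P}^\infty)$ is a map over $\mathcal{J}_g$. The inclusion $\widehat{Div}^\epsilon\hookrightarrow\widehat{Div}$ is an inclusion of bundles over $\mathcal{J}_g$ which is a homotopy equivalence on each fibre (each $\mathcal{D}^\epsilon(n)\hookrightarrow\mathcal{D}(n)$ being such, thanks to condition~(\ref{epsilon})); hence it admits a homotopy inverse over $\mathcal{J}_g$, which I would fix once and for all. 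Composing, $S\colon\widehat{Div}\to\mathcal{J}_g\times Map_*(F_g,{\mathbb{C}P}^\infty\vee{\mathbb{C}P}^\infty)$ is a map of fibrations over $\mathcal{J}_g$: the source is the fibration $\widehat{Div}\to\mathcal{J}_g$ noted in~\textsection3, and the target is the trivial fibration.

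Next, Theorem~\ref{thm:scanning} identifies the restriction of $S$ to the fibre over any $J\in\mathcal{J}_g$ with Segal's scanning map $\widehat{Div}(J)\to Map_*(F_g,{\mathbb{C}P}^\infty\vee{\mathbb{C}P}^\infty)$, which is a homotopy equivalence. Thus $S$ is fibrewise a homotopy equivalence over the paracompact base $\mathcal{J}_g$, and Dold's theorem yields that $S$ is a fibrewise homotopy equivalence, in particular a homotopy equivalence of total spaces. Since $\mathcal{J}_g$ is moreover contractible, the same argument can be phrased more elementarily: trivialise both fibrations to get $\widehat{Div}\simeq\mathcal{J}_g\times\widehat{Div}(J_0)$ and $\mathcal{J}_g\times Map_*(F_g,{\mathbb{C}P}^\infty\vee{\mathbb{C}P}^\infty)$, under which $S$ becomes, up to homotopy, $\mathrm{id}_{\mathcal{J}_g}\times(S|_{\widehat{Div}(J_0)})$, and conclude by Theorem~\ref{thm:scanning}.

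The only real obstacle is bookkeeping: making sure that the chosen homotopy inverse of $\widehat{Div}^\epsilon\hookrightarrow\widehat{Div}$ and any trivialisations used can all genuinely be taken over $\mathcal{J}_g$, and that $\widehat{Div}\to\mathcal{J}_g$ meets the hypotheses of Dold's theorem (it should, as $\mathcal{J}_g$ is paracompact and the projection is a fibration). Once the fibrewise picture is in place, no input beyond Theorem~\ref{thm:scanning} is required.
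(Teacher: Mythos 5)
Your argument is correct and is exactly the justification the paper leaves implicit: the corollary is stated with no proof because it is meant to follow from Theorem~\ref{thm:scanning} by viewing $S$ as a fibrewise map over the (paracompact, contractible) base $\mathcal{J}_g$ and applying Dold's criterion or simply trivialising over the contractible base. Your additional care about taking the homotopy inverse of $\widehat{Div}^\epsilon\hookrightarrow\widehat{Div}$ over $\mathcal{J}_g$ only makes explicit bookkeeping the paper suppresses.
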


Note that $S$ restricts on the component $S:\widehat{Div}_d \rightarrow Map^d_*(F_g, {\mathbb{C}P}^\infty\vee {\mathbb{C}P}^\infty)$ to a map whose target is the space of maps with bi-degree $(d,d)$.  By Corollary~\ref{scan-equivalence} this restriction is a homotopy equivalence.

\subsection{Comparing Sequences}

Observe that the  connected component of the constant map $Map^0_*(F_g,{\mathbb{C}P}^\infty)$ as well as the $g$-torus $T_0$ are both models for $K(\mathbb{Z}^g,1)$.  So, abstractly, there is a homotopy equivalences $D:T_0\rightarrow Map_*^0(F_g,{\mathbb{C}P}^\infty)$.  In fact, as in (\cite{segal:scanning}, \textsection4), it is possible to choose the homotopy equivalence $D$ via Poincare' duality such that there results a homotopy commutative diagram
\[
\xymatrix{
\mathcal{J}_g^d({\mathbb{C}P}^1)_*  \ar[d] \ar[r]
&
\mathcal{J}_g\times Map_*^d(F_g,S^2)  \ar[d]
\\
\widehat{Div}_d    \ar[d]^{\mathcal{I}} \ar[r]^-S
&
\mathcal{J}_g\times Map^d_*(F_g, {\mathbb{C}P}^\infty\vee {\mathbb{C}P}^\infty)  \ar[d]^{sub}
\\
T_0     \ar[r]^-D
&
Map^0_*(F_g, {\mathbb{C}P}^\infty\vee {\mathbb{C}P}^\infty)
}
\]
where the indicated map is induced by subtraction in ${\mathbb{C}P}^\infty\cong B^2\mathbb{Z}$ so that the right vertical sequence is the fibration  induced from the sequence in~(\ref{eq:fibration}).

Moreover, imitating Segal's lines (\cite{segal:scanning}, Lemma $4.7$), one can verify that the resulting map on fibers is the standard inclusion 
\[
\mathcal{J}^d_g({\mathbb{C}P}^1)_*\hookrightarrow \mathcal{J}_g\times Map^d_*(F_g,S^2).
\]
Because $S$ and $D$ are equivalences (Theorem~\ref{thm:scanning}), we conclude the following lemma. 

\begin{lem}\label{lem:homology-fibration-2}
The inclusion 
\[
\mathcal{J}_g^d({\mathbb{C}P}^1)_*\hookrightarrow \mathcal{J}_g\times Map_*^d(F_g,S^2)
\]
induces an isomorphism in $H_q(-)$ for $q<d-2g$.
\end{lem}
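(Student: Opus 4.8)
The plan is to extract the lemma from the homotopy-commutative ladder displayed just above, using three facts already in hand: the left-hand column $\mathcal{J}^d_g({\mathbb{C}P}^1)_* \to \widehat{Div}_d \xrightarrow{\mathcal{I}} T_0$ is a homology fibration up to degree $d-2g$ (the corollary just proved, deduced from Theorem~\ref{thm:homology-fibration} and Theorem~\ref{lem:divisors}); the right-hand column is an honest (homotopy) fibration with fibre $\mathcal{J}_g \times Map_*^d(F_g,S^2)$, being the fibration obtained by applying $Map_*(F_g,-)$ to the sequence~(\ref{eq:fibration}); and the two horizontal comparison maps $S$ and $D$ are homotopy equivalences by Theorem~\ref{thm:scanning}, Corollary~\ref{scan-equivalence}, and the Poincar\'e-duality construction of $D$.

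First I would convert the middle and bottom rows into a strictly commuting map of Serre fibrations (replace $\mathcal{I}$ and the subtraction map $sub$ by fibrations, or simply pass to homotopy fibres everywhere), obtaining an induced map on homotopy fibres
\[
\Phi\colon \mathrm{hofib}(\mathcal{I}) \longrightarrow \mathrm{hofib}(sub) \simeq \mathcal{J}_g \times Map_*^d(F_g,S^2).
\]
Since $S$ and $D$ are homotopy equivalences, this is a map of fibrations inducing weak equivalences on total spaces and on bases, so $\Phi$ is a homotopy equivalence --- e.g. by the five-lemma applied to the two long exact sequences of homotopy groups, or by the gluing lemma for homotopy pullbacks. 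Next, the homology-fibration hypothesis on the left column says precisely that the canonical inclusion of the honest fibre $\mathcal{J}^d_g({\mathbb{C}P}^1)_* \hookrightarrow \mathrm{hofib}(\mathcal{I})$ induces an isomorphism in $H_q(-)$ for $q < d-2g$. Composing with $\Phi$ yields an $H_q$-isomorphism for $q<d-2g$ from $\mathcal{J}^d_g({\mathbb{C}P}^1)_*$ to $\mathcal{J}_g \times Map_*^d(F_g,S^2)$, and it remains only to identify this composite, up to homotopy, with the standard inclusion named in the statement.

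That identification is exactly the content of the paragraph preceding the lemma (``imitating Segal's Lemma~4.7''): one traces through the explicit Poincar\'e-duality equivalence $D$ and the explicit scanning formula defining $S$ to see that, on fibres, a based degree-$d$ meromorphic function on $(F_g,J)$ --- encoded by its disjoint zero/pole divisor --- is carried to the underlying continuous degree-$d$ map $F_g \to S^2 \hookrightarrow {\mathbb{C}P}^\infty \vee {\mathbb{C}P}^\infty$. I expect this last step to be the main obstacle: one must verify that Segal's construction of $D$ and the scanning map $S$ are set up to be mutually compatible so that the triangle of fibres commutes up to homotopy, and --- the novelty of the present setting --- that this compatibility can be arranged uniformly as $J$ ranges over $\mathcal{J}_g$. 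Everything else is formal manipulation of fibration sequences, carried out throughout within the range $q < d-2g$ dictated by the homology-fibration hypothesis; in particular no further spectral-sequence comparison is needed here, the relevant comparison having already been absorbed into the proof of Theorem~\ref{lem:divisors}.
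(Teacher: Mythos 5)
Your proposal is correct and follows essentially the same route as the paper: the paper's proof likewise reads the lemma off the displayed homotopy-commutative ladder, using that the left column is a homology fibration up to degree $d-2g$ (via Theorem~\ref{thm:homology-fibration}, Theorem~\ref{lem:divisors} and the corollary), that the right column is the fibration induced from~(\ref{eq:fibration}), that $S$ and $D$ are equivalences, and that the induced map on fibres is the standard inclusion (the ``imitating Segal's Lemma~4.7'' step). Your only loose phrase --- treating $\mathcal{J}^d_g({\mathbb{C}P}^1)_*$ as the honest fibre of $\mathcal{I}$ on $\widehat{Div}_d$ rather than of the finite stage $\mathcal{J}_g\times Div_d$, with Theorem~\ref{lem:divisors} supplying the comparison --- is exactly the point you already flag as absorbed there, so it is not a gap.
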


\section{Proof of Theorem~\ref{thm:main-theorem}}

\subsection{Unbased Mapping spaces}

We are interested in the unbased mapping spaces $\mathcal{J}^d_g({\mathbb{C}P}^1)$ and $Map^d(F_g,S^2)$. 
\begin{lem}\label{lem:main-lemma}
The inclusion $\mathcal{J}_g^d({\mathbb{C}P}^1)\hookrightarrow Map^d(F_g,S^2)$ induces an isomorphism in $H_q(-)$ for $q<d-2g$.
\end{lem}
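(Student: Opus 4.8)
The plan is to deduce Lemma~\ref{lem:main-lemma} from the based version, Lemma~\ref{lem:homology-fibration-2}, by comparing the two evaluation fibrations that relate based and unbased mapping spaces. Concretely, evaluation at the base point $x_0\in F_g$ gives a map $\mathrm{ev}\colon Map^d(F_g,S^2)\to S^2$ whose fiber over $\infty\in S^2$ is precisely $Map^d_*(F_g,S^2)$; this is a (Hurewicz) fibration since $\{x_0\}\hookrightarrow F_g$ is a cofibration. On the holomorphic side, there is the analogous map $\mathcal{J}^d_g({\mathbb{C}P}^1)\to {\mathbb{C}P}^1$, $(J,h)\mapsto h(x_0)$, whose fiber over $\infty$ is $\mathcal{J}^d_g({\mathbb{C}P}^1)_*$. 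First I would check that this second map is also a fibration, or at least a homology fibration up to the relevant degree — one way is to use the $\mathrm{PGL}_2(\mathbb{C})$-action on ${\mathbb{C}P}^1$, which acts transitively on ${\mathbb{C}P}^1$ and lifts (by postcomposition) to an action on $\mathcal{J}^d_g({\mathbb{C}P}^1)$ covering the action on ${\mathbb{C}P}^1$; transitivity of the base action then trivializes the bundle locally and identifies all fibers with $\mathcal{J}^d_g({\mathbb{C}P}^1)_*$ up to homotopy.

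The inclusion $\mathcal{J}^d_g({\mathbb{C}P}^1)\hookrightarrow Map^d(F_g,S^2)$ then fits into a map of fibration sequences
\[
\xymatrix{
\mathcal{J}^d_g({\mathbb{C}P}^1)_* \ar[r] \ar[d]
&
\mathcal{J}^d_g({\mathbb{C}P}^1) \ar[r]^{\mathrm{ev}_{x_0}} \ar[d]
&
{\mathbb{C}P}^1 \ar[d]^{=}
\\
Map^d_*(F_g,S^2) \ar[r]
&
Map^d(F_g,S^2) \ar[r]^{\mathrm{ev}_{x_0}}
&
S^2,
}
\]
where the right vertical map is the identity (under ${\mathbb{C}P}^1\approx S^2$). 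Here I am implicitly using that the contractibility of $\mathcal{J}_g$ makes $\mathcal{J}^d_g({\mathbb{C}P}^1)$ the correct unbased analogue; strictly one should record that $\mathcal{J}^d_g({\mathbb{C}P}^1)\to\mathcal{J}_g$ has fiber $Hol^d((F_g,J),{\mathbb{C}P}^1)$ and the evaluation is compatible with this. By Lemma~\ref{lem:homology-fibration-2} the left vertical map induces an isomorphism in $H_q$ for $q<d-2g$, and the right vertical map is an isomorphism in all degrees. Applying the Zeeman comparison theorem (\cite{zeeman:comparison}) to the Leray--Serre spectral sequences of the two fibrations — exactly as invoked in the proof of Theorem~\ref{lem:divisors} — yields that the middle vertical map induces an isomorphism in $H_q(-)$ for $q<d-2g$, which is the assertion of the lemma.

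The main obstacle I anticipate is the spectral-sequence bookkeeping: the Zeeman comparison theorem requires control of the local coefficient systems and a clean statement about which range of $q$ survives, and here the base $S^2$ is simply connected (so the coefficient systems are trivial) but one must still verify that an isomorphism through degree $d-2g$ on fibers, combined with an isomorphism on the base, propagates to an isomorphism through degree $d-2g$ on total spaces — this is where the precise hypotheses of Zeeman's theorem must be matched, and it is the same kind of argument already used (and deferred to the reader) in Theorem~\ref{lem:divisors}. A secondary point to be careful about is that $\mathcal{J}^d_g({\mathbb{C}P}^1)\to{\mathbb{C}P}^1$ genuinely behaves like a fibration for the purposes of this comparison; if it is only a homology fibration one should phrase everything in terms of homotopy fibers, using the $\mathrm{PGL}_2(\mathbb{C})$-action to identify $hofiber$ with $\mathcal{J}^d_g({\mathbb{C}P}^1)_*$ in the relevant range. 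Once those two technical checks are in place, the conclusion follows formally.
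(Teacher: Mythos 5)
Your proposal is correct and follows essentially the same route as the paper: compare the base-point evaluation fibrations over ${\mathbb{C}P}^1\to S^2$, feed Lemma~\ref{lem:homology-fibration-2} into a comparison of Leray--Serre spectral sequences, and conclude for the total spaces in the range $q<d-2g$ (the paper runs the filtration induction with the 5-lemma where you invoke Zeeman's comparison theorem, a cosmetic difference). Your extra observation that the $\mathrm{PGL}_2(\mathbb{C})$-action by postcomposition trivializes the holomorphic evaluation map is a welcome justification of a point the paper leaves implicit.
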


\begin{proof}
Consider the base-point evaluation fibrations
\begin{equation}\label{eq:morphism}
\xymatrix{
\mathcal{J}_g^d({\mathbb{C}P}^1)_*  \ar[d] \ar[r]
& 
\mathcal{J}_g\times Map_*^d(F_g,S^2)   \ar[d] 
\\
\mathcal{J}_g^d({\mathbb{C}P}^1)   \ar[d] \ar[r]
&
\mathcal{J}_g\times Map^d(F_g,S^2)    \ar[d]
\\
{\mathbb{C}P}^1     \ar[r]
& 
S^2.
}
\end{equation}

There results a morphism of Leray-Serre $H_*$-spectral sequences.  Lemma~\ref{lem:homology-fibration-2} shows that this functor induces an isomorphism on the $E^2_{p,q}$ page for $q<d-2g$.  We thus have an isomorphism on the $E^r_{p,q}$ pages for all $r$ when $p+q<d-2g$.  In particular, there is an isomorphism on the $E^\infty_{p,q}$ page for $p+q<d-2g$.

We are now confronted with the common issue of concluding the middle horizontal map in diagram~\ref{eq:morphism} induces an isomorphism in $H_*$, for $*=p+q<d-2g$, knowing it is an isomorphism on the filtration quotients $E^\infty_{p,q}$.  For this one uses induction on the filtration degree; the inductive step is clear in light of the 5-lemma.  

\end{proof}

\subsection{The action of $Diff^+_g$}

Our ultimate goal being to understand moduli space, we now want a version of Lemma~\ref{lem:main-lemma} which is quotiented by the action of $Diff^+_g$.

\begin{theorem}[Main theorem ($n=1$)]\label{thm:main-theorem-1}
The map $\mathcal{M}^d_g({\mathbb{C}P}^1) \rightarrow \mathcal{MT}^d_g({\mathbb{C}P}^1)$ induced by the natural inclusion induces an isomorphism in $H_q(-)$ for $q< d-2g$.
\end{theorem}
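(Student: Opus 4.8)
The plan is to deduce Theorem~\ref{thm:main-theorem-1} from Lemma~\ref{lem:main-lemma} by passing to homotopy orbit spaces for the $Diff^+_g$-action, exactly as the analogous passage from the fixed-$J$ results to the moduli statement. First I would recall that the inclusion $\mathcal{J}^d_g({\mathbb{C}P}^1)\hookrightarrow \mathcal{J}_g\times Map^d(F_g,S^2)$ is $Diff^+_g$-equivariant: the group acts by pulling back $J$ and precomposing maps on both sides, and the inclusion simply forgets that $h$ is $J$-holomorphic. Applying the Borel construction $(-)//Diff^+_g = (EDiff^+_g\times -)/Diff^+_g$ to this equivariant map produces
\[
\mathcal{J}^d_g({\mathbb{C}P}^1)//Diff^+_g \longrightarrow (\mathcal{J}_g\times Map^d(F_g,S^2))//Diff^+_g,
\]
which by definition is the map $\mathcal{M}^d_g({\mathbb{C}P}^1)\to\mathcal{MT}^d_g({\mathbb{C}P}^1)$ (using also that $\mathcal{J}_g$ is contractible so $\mathcal{MT}$ is equivalent to $Map^d(F_g,S^2)//Diff^+_g$, as noted in the introduction).

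The main step is a spectral sequence comparison. Both homotopy orbit spaces fiber over $BDiff^+_g$ with fibers $\mathcal{J}^d_g({\mathbb{C}P}^1)$ and $\mathcal{J}_g\times Map^d(F_g,S^2)$ respectively; more precisely one has the Borel fibrations
\[
\mathcal{J}^d_g({\mathbb{C}P}^1)\to \mathcal{M}^d_g({\mathbb{C}P}^1)\to BDiff^+_g,
\qquad
\mathcal{J}_g\times Map^d(F_g,S^2)\to \mathcal{MT}^d_g({\mathbb{C}P}^1)\to BDiff^+_g,
\]
and the inclusion induces a morphism of the associated Leray--Serre homology spectral sequences which is the identity on the base. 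On $E^2_{p,q} = H_p(BDiff^+_g; \mathcal{H}_q)$ this morphism is induced by the map of coefficient systems $H_q(\mathcal{J}^d_g({\mathbb{C}P}^1))\to H_q(\mathcal{J}_g\times Map^d(F_g,S^2))$; Lemma~\ref{lem:main-lemma} says this is an isomorphism of $\pi_1(BDiff^+_g) = \pi_0(Diff^+_g)$-modules for $q<d-2g$, hence an isomorphism on $E^2_{p,q}$ for all $p$ in that range of $q$. It follows that the morphism is an isomorphism on $E^r_{p,q}$ for all $r$ whenever $q<d-2g$, in particular on $E^\infty_{p,q}$ for $p+q<d-2g$, and then the usual induction on filtration degree together with the 5-lemma (the same argument used at the end of the proof of Lemma~\ref{lem:main-lemma}) yields the isomorphism $H_q(\mathcal{M}^d_g({\mathbb{C}P}^1))\cong H_q(\mathcal{MT}^d_g({\mathbb{C}P}^1))$ for $q<d-2g$.

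The one technical point requiring care is that Lemma~\ref{lem:main-lemma} must be upgraded from a statement about absolute homology to a statement about homology as a local coefficient system over $BDiff^+_g$ — that is, the isomorphism should be natural for the $Diff^+_g$-action, not merely an abstract isomorphism. This is automatic because the homology isomorphism of Lemma~\ref{lem:main-lemma} is induced by an actual $Diff^+_g$-equivariant map of spaces, so naturality in the group action is built in; one simply remarks that the Leray--Serre spectral sequence of a Borel fibration has $E^2$-term the group homology of $\pi_0(Diff^+_g)$ with coefficients in the homology of the fiber as a module, and a morphism of equivariant fibrations induces the evident map there. I expect the main obstacle to be purely expository: making sure the identification of the Borel construction with $\mathcal{M}^d_g({\mathbb{C}P}^1)$ and $\mathcal{MT}^d_g({\mathbb{C}P}^1)$ is spelled out cleanly (including the contractibility-of-$\mathcal{J}_g$ simplification on the topological side) and that the coefficient-system comparison is stated correctly; there is no new geometric input beyond Lemma~\ref{lem:main-lemma}.
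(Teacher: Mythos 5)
Your proposal is correct and follows essentially the same route as the paper: compare the two Borel fibrations over $BDiff^+_g$ via the equivariant inclusion, feed Lemma~\ref{lem:main-lemma} into the Leray--Serre spectral sequence comparison (with local coefficients, which is automatic since the fiber comparison is induced by an equivariant map), and conclude by the filtration induction with the 5-lemma. Your explicit remark about upgrading to the local-coefficient statement is a point the paper leaves implicit, but it is the same argument.
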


\begin{proof}
There is the following morphism of fibration sequences

\[
\xymatrix{
\mathcal{J}^d_g({\mathbb{C}P}^1) \ar[r]^-{\cong_{H_{<d-2g}}} \ar[d]  
&  
\mathcal{J}_g\times Map^d(F,S^2) \ar[d] 
\\
EDiff^+_g\times_{Diff^+_g}\mathcal{J}^d_g({\mathbb{C}P}^1) \ar[r] \ar[d] 
&
EDiff^+_g\times_{Diff^+_g} (\mathcal{J}_g\times Map^d(F,S^2)) \ar[d]  
\\
BDiff^+_g \ar[r]^{id} 
& 
BDiff^+_g
}
\]

Lemma~\ref{lem:main-lemma} shows that the top horizontal map is an isomorphism in $H_q(-)$ for $q<d-2g$.  Recognizing the total spaces in the above diagram as the \textit{homotopy} moduli spaces, the same spectral sequence argument proving Lemma~\ref{lem:main-lemma} finishes the proof.

\end{proof}

\subsection{Maps to ${\mathbb{C}P}^n$ for $n\geq 1$}

In this subsection we will sketch the idea for how to deal with maps to $\cp$ for $n>1$.  The argument is nearly identical to that explained in~\cite{segal:scanning}.

\begin{theorem}[Main Theorem]\label{thm:main-theorem'}
The map $\mathcal{M}_g^d(\cp)\rightarrow \mathcal{MT}_g^d(\cp)$ induces an isomorphism in $H_q(-)$ for $q<(d-2g)(2n-1)$.
\end{theorem}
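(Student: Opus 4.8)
The plan is to reduce the case $n\geq 1$ to the case $n=1$ (Theorem~\ref{thm:main-theorem-1}) by running the entire machinery of Sections 2--5 with ${\mathbb{C}P}^1$ replaced by $\cp$, keeping track of how the connectivity bound improves from $d-2g$ to $(d-2g)(2n-1)$. The key geometric input, following Segal, is that a holomorphic map $(F_g,J)\to\cp$ is given (after a choice of basis) by an $(n+1)$-tuple of sections of a line bundle of degree $d$, equivalently by an $(n+1)$-tuple of effective divisors $(\eta_0,\dots,\eta_n)$ of degree $d$ with no common point, modulo the diagonal action of adding a common divisor; the ``rational function'' picture is the case $n=1$ of this. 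So I would first set up the analogue $Div_d(\cp)$ of the space $Div_d$, namely tuples of $n+1$ pairwise-disjoint-where-it-matters divisors, together with the parametrized version $\widehat{Div}_d(\cp)\to\mathcal{J}_g$ built exactly as in Section~2 (modulo the gap flagged in Remark~\ref{error}, which I would simply inherit). Abel's theorem again identifies the fibers of an integration map $\mathcal{I}:\mathcal{J}_g\times Div_d(\cp)\to T_0^{\oplus n}$ (or an appropriate torus quotient) with $\mathcal{J}^d_g(\cp)_*$.

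The second step is to prove the homology-fibration statement in this generality: that $\mathcal{I}:\mathcal{J}_g\times Div_d(\cp)\to T_0^{\,\text{appropriate}}$ is a homology fibration up to degree $(d-2g)(2n-1)$. This is the analogue of Theorem~\ref{thm:homology-fibration}, and the proof is formally the same nested-induction-over-strata argument: one stratifies $(Sp_d)^{\times(n+1)}$ by the degree of the common intersection divisor of the $n+1$ pieces, the open stratum of ``intersection degree $k$'' is $Div_{d-k}(\cp)\times Sp_k$, each inclusion of strata has a well-behaved normal bundle so the relative terms are Thom spaces, and one feeds the Riemann--Roch/Abel computation (for $d>2g$ the relevant linear systems have the expected dimension and the maps off the diagonal are fiber bundles) into the $5$-lemma. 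The only arithmetic difference is bookkeeping: the codimension of the stratum where $n+1$ divisors share $k$ points is now roughly $k(2n-1)$ rather than $k$ (deleting a common point costs one complex degree in each of the $n+1$ divisors but gains back the $Sp_k$ factor of complex dimension $k$, net $k\cdot((n+1)-1) \cdot \dots$ — I'd need to get the exact constant $2n-1$ right here), which is precisely what upgrades the range from $d-2g$ to $(d-2g)(2n-1)$. Then the colimit/telescope argument (Theorem~\ref{lem:divisors} and its corollary) goes through verbatim to give $\mathcal{I}:\widehat{Div}_d(\cp)\to T_0$-torus a homology fibration up to degree $(d-2g)(2n-1)$.

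The third step is the scanning comparison: I would construct a scanning map $S:\widehat{Div}_d(\cp)\to\mathcal{J}_g\times Map^d_*(F_g, Y)$ where $Y$ is the total space of the $S^1{}^{\times n}$-equivariant bundle over $(BS^1)^{\vee(n+1)}$ analogous to~(\ref{eq:fibration}) — concretely, $Y\simeq$ the fiber of $(\cp)^{\text{config model}}$, matching Segal's replacement of the fibration $S^2\to{\mathbb{C}P}^\infty\vee{\mathbb{C}P}^\infty\to{\mathbb{C}P}^\infty$ by $\cp\to \bigvee^{n+1}{\mathbb{C}P}^\infty \to ({\mathbb{C}P}^\infty)^{?}$. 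Segal's theorem that the fiberwise scanning map is a homotopy equivalence (Theorem~\ref{thm:scanning}) has a $\cp$-version already in~\cite{segal:scanning}, so I'd cite it, conclude $S$ is an equivalence over $\mathcal{J}_g$, and then — exactly as in the ``Comparing Sequences'' subsection — get a homotopy-commutative ladder of fibration sequences whose induced map on fibers is the inclusion $\mathcal{J}^d_g(\cp)_*\hookrightarrow\mathcal{J}_g\times Map^d_*(F_g,\cp)$; since the base and total maps are equivalences (resp.\ homology equivalences in the stated range) the $5$-lemma / Zeeman comparison gives that this inclusion is an isomorphism on $H_q$ for $q<(d-2g)(2n-1)$. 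This is the $\cp$-analogue of Lemma~\ref{lem:homology-fibration-2}.

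The final step is to pass from based to unbased maps and then to homotopy-quotient by $Diff^+_g$, which is purely formal and identical to Section~5: one runs the basepoint-evaluation fibration (fiber $\cp$ instead of $S^2$) and the $Diff^+_g$-fibration over $BDiff^+_g$, and in each case a morphism of Leray--Serre spectral sequences plus the standard induction-on-filtration-degree argument (already used in Lemma~\ref{lem:main-lemma} and Theorem~\ref{thm:main-theorem-1}) upgrades the fiberwise statement to the statement about total spaces $\mathcal{M}^d_g(\cp)\to\mathcal{MT}^d_g(\cp)$. I expect the main obstacle — beyond of course the unresolved gap of Remark~\ref{error}, which is orthogonal to this reduction — to be getting the combinatorics of the stratification exactly right so that the improved range $(d-2g)(2n-1)$ drops out: one must verify that in the $(n+1)$-tuple-of-divisors model the codimension of the depth-$k$ stratum is exactly $k(2n-1)$ (and that the Riemann--Roch dimension count $d-g \mapsto (d-n g)$ or similar interacts correctly with it), since an off-by-one here changes the theorem. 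Everything else is a faithful transcription of the $n=1$ argument with ${\mathbb{C}P}^1\rightsquigarrow\cp$ and an extra $\mathcal{J}_g$ parameter that has already been carried through in the previous sections.
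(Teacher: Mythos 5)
Your outline follows the same route as the paper's own (sketched) proof of this theorem: replace pairs of disjoint divisors by $(n+1)$-tuples of degree-$d$ positive divisors having no point common to all $n+1$ of them (the paper's $Div_d^{(n)}$), use the modified Abel theorem to identify the fiber of the integration map $\mathcal{I}\colon \mathcal{J}_g\times Div_d^{(n)}\to T_0^n$ with $\mathcal{J}^d_g(\cp)_*$, rerun the stratification/homology-fibration and scanning arguments with the extra $\mathcal{J}_g$-parameter, and then descend by the same Leray--Serre arguments for the evaluation fibration and for the Borel construction over $BDiff^+_g$ as in Theorem~\ref{thm:main-theorem-1}; both you and the paper inherit the unresolved issue of Remark~\ref{error}, and both leave the same stratification details to the reader.

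The two points you flag as uncertain are exactly where your write-up drifts from the paper, so let me pin them down. First, the homotopy-theoretic scanning target is not the wedge $\bigvee^{n+1}{\mathbb{C}P}^\infty$: the divisor condition only forbids a point lying on \emph{all} $n+1$ divisors, so as many as $n$ of them may meet a small scanning disc, and the scanned value therefore lands in the fat wedge $W_{n+1}({\mathbb{C}P}^\infty)\subset \prod^{n+1}{\mathbb{C}P}^\infty$ of tuples with at least one coordinate at the basepoint. This is the space the paper uses, sitting in the fibration sequence ${\mathbb{C}P}^n\to W_{n+1}({\mathbb{C}P}^\infty)\to \prod^{n}{\mathbb{C}P}^\infty$ that generalizes~(\ref{eq:fibration}); for $n=1$ the fat wedge coincides with the ordinary wedge, which is why your formula is correct only in that case, and for $n\geq 2$ the wedge would break both the scanning map and the identification of the fiber with ${\mathbb{C}P}^n$. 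Second, the codimension count: the locus in $(Sp_d)^{\times(n+1)}$ where the common divisor has degree at least $k$ has real codimension $2nk$ (complex codimension $nk$), not $k(2n-1)$. The factor $2n-1$ emerges from the induction, not from the codimension itself: the inductive range for the depth-$k$ open stratum $Div^{(n)}_{d-k}\times Sp_k$ is roughly $(d-k-2g)(2n-1)$, and adding the Thom-isomorphism shift $2nk$ gives $(d-2g)(2n-1)+k$, a net surplus of $k$ that feeds the 5-lemma, exactly as $(d-k-2g)+2k=(d-2g)+k$ does in the $n=1$ argument. Also, a small point in your first step: in the normalized model there is no quotient by ``adding a common divisor'' (that would create common zeros); the residual ambiguity is only a rescaling of the individual sections, which is absorbed in the based identification coming from Abel's theorem, just as in the $n=1$ case. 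With these corrections your proposal is a faithful transcription of the paper's argument.
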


\begin{proof}[Proof (sketch)]

The proof is analogous to the proof of Theorem~\ref{thm:main-theorem-1}.  We will sketch the appropriate modifications.  Details will be left to the interested reader.

A holomorphic map $h\in \mathcal{J}^d_g({\mathbb{C}P}^1)_*$ is a based meromorphic function and is thus determined by its poles and zeros.  Equivalently, we could regard $h$ as a pair $(r_0,r_1)$ of degree $d$ polynomial functions on $(F_g,J)$ whose zeros are disjoint.  This is the essence of Abel's theorem that we used earlier to regard $\mathcal{J}^d_g({\mathbb{C}P}^1)_*$ as a subspace of $\mathcal{J}_g\times Div_d$.  Now, think of $h\in \mathcal{J}^d_g({\mathbb{C}P}^n)_*$ similarly as an $(n+1)$-tuple $(r_0,...,r_n)$ of degree $d$ polynomials with $\bigcap_{i=0}^n \{r_i=0\} = \emptyset$.  One would thus generalize $Div_d$ to $Div_d^{(n)}$ consisting of $(n+1)$-tuples of positive divisors which are $(n+1)$-wise disjoint.

A modification of Abel's theorem shows that such an $(n+1)$-tuple comes from a map into $\cp$ if and only if each divisor has the same image in the Jacobian variety and, moreover, the fiber of the similarly defined integration map $\mathcal{I}:\mathcal{J}_g\times Div_d^{(n)}\rightarrow T_0^n$ is the space of based holomorphic maps $\mathcal{J}^d_g({\mathbb{C}P}^n)_*$.  The surrounding analogous constructions are similar.

On the homotopy theoretic side, one constructs the space 
\[
W_{n+1}({\mathbb{C}P}^\infty) \subset \prod^{n+1} {\mathbb{C}P}^\infty  
\]
consisting of $(n+1)$-tuples with at least one entry the base point of ${\mathbb{C}P}^\infty$.  Indeed, $W_2({\mathbb{C}P}^\infty) = {\mathbb{C}P}^\infty \vee {\mathbb{C}P}^\infty$.  Similar to sequence~\ref{eq:fibration} is the (homotopy) fibration sequence
\[
{\mathbb{C}P}^n\rightarrow W_{n+1}({\mathbb{C}P}^\infty) \rightarrow \prod^n {\mathbb{C}P}^\infty.
\]
There is a similar scanning map used to compare $Div_d^{(n)}$ to $Map_*^d(F_g,W_{n+1}({\mathbb{C}P}^\infty))$. 
Details are left to the interested reader.

\end{proof}

\section{Variants of the main theorem}\label{variants}

In this final section we outline two simple variations on the main theorem~\ref{thm:main-theorem}.

\subsection{Marked points}

Choose $k$ points $\{r_i\}\subset F_g$.  Let $Diff^+_{g,k}$ denote the subgroup of $Diff^+_g$ consisting of those diffeomorphisms of $F_g$ which fix each $r_i$ for $1\leq i \leq k$.  Define the \textit{moduli space of degree $d$ marked holomorphic curves of genus $g$ in ${\mathbb{C}P}^n$} as the orbit space
\[
\mathcal{M}^d_{g,k}({\mathbb{C}P}^n): = \mathcal{J}^d_g({\mathbb{C}P}^n)//Diff^+_{g,k}.
\]
Do similarly for the topological counterpart $\mathcal{MT}^d_{g,k}({\mathbb{C}P}^n)$.

\begin{theorem}\label{thm:marks}
The standard map
\[
\mathcal{M}^d_{g,k}({\mathbb{C}P}^n) \rightarrow \mathcal{MT}^d_{g,k}({\mathbb{C}P}^n)
\]
induces an isomorphism in $H_q(-)$ for $q<(d-2g)(2n-1)$.
\end{theorem}

\begin{proof}

The proof is nearly identical to that of Theorem~\ref{thm:main-theorem'}.

\end{proof}

\subsection{Moduli of singular Riemann surfaces}\label{moduli-singular}

We define a notion of a singular surface specific to our purposes.  Let $F$ be a compact Hausdorff space.  A structure of a singular surface on $F$ is the data of a finite subset $P\subset F$ and the structure of an oriented smooth surface on $F\setminus P$.  Such an $F$ endowed with this structure will be referred to as a \textit{singular surface}.  Such singular surfaces are characterized as quotient spaces
\[
F = (\amalg_1^n F_i)/\sim.
\]
where each $F_i$ is a smooth oriented surface with a finite collection of marked points $\{p_{i_k}\}_1^{n_i}\subset F_i$, which are identified $p_{i_k}\sim p_{j_l}$ in some way.  Refer to the data $(F_i,\{p_{i_k}\}_1^{n_i})$ as a \textit{normalization} of $F$; note that this data is unique.  
Refer to the subset $P\subset F$ as the \textit{nodes} and denote the cardinality $n(F):= \lvert P\rvert$.  Define the \textit{genus} of $F$ as the number $g(F)$ given by
\[
\chi(F) = 2-2g(F)+n(F)
\]
Say a singular surface is \textit{irreducible} if its normalization consists of a single connected surface.

A complex structure $J$ on such a singular surface $F$ is the data of a complex structure $J_i$ on each component $F_i$ of its normalization.  Write $\mathcal{J}_F$ for the space of such complex structures with the apparent topology.  Refer to a pair $(F,J)$ as a \textit{singular Riemann surface}.  A holomorphic map from $(F,J)$ to a complex manifold $Y$ is the data of holomorphic maps $(F_i,J_i)\xrightarrow{h_i} Y$ such that $h_i(p_{i_k}) = h_j(p_{j_l})$ whenever $p_{i_k}\sim p_{j_l}$ in $F$.  The degree of such a map is the sum $\Sigma_i deg( h_i)\in H_2(Y)$.  Let $Hol^\alpha((F,J),Y)$ be the space of degree $\alpha$ holomorphic maps from $(F,J)$ into $Y$, topologized in the obvious way.

\begin{remark}
In what follows, most of the statements about singular surfaces are stated only for \textit{irreducible} singular surfaces.  One could make statements about non-irreducible surfaces provided one specifies the degree of maps on each component of the normalization.
\end{remark}

In~\cite{segal:scanning}, Segal proves the following
\begin{theorem}[Segal]
Let $F$ be an irreducible singular surface.  The standard inclusion
\[
Hol^d((F,J),{\mathbb{C}P}^n)\hookrightarrow  Map^d(F,{\mathbb{C}P}^n)
\]
induces an isomorphism in $H_q(-)$ for $q<(d-2(g(F)-n(F)+1))(2n-1)$.
\end{theorem}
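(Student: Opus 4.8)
The plan is to follow Segal's scanning argument, as developed in \textsection3--\textsection5, now carrying along the coincidence data at the nodes. As in the proof of Theorem~\ref{thm:main-theorem'} it suffices to treat $n=1$, i.e.\ ${\mathbb{C}P}^1\approx S^2$; the passage to general $n$ is the same formal maneuver, replacing pairs of divisors by $(n+1)$-tuples and ${\mathbb{C}P}^\infty\vee{\mathbb{C}P}^\infty$ by $W_{n+1}({\mathbb{C}P}^\infty)$. Write $\widetilde F$ for the (connected) normalization of $F$, of genus $\widetilde g$, with marked points $\{p_{i_k}\}$ partitioned by $\sim$ into the $n(F)$ classes indexed by the nodes. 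A degree-$d$ holomorphic map $(F,J)\to{\mathbb{C}P}^1$ is precisely a degree-$d$ meromorphic function $h$ on $(\widetilde F,J)$ with $h(p_{i_k})=h(p_{j_l})$ whenever $p_{i_k}\sim p_{j_l}$; by Abel's theorem (\textsection\ref{abel}), after choosing a base point $x_0\in\widetilde F$, a based such $h$ is recorded by a pair of disjoint divisors $(\eta,\zeta)$ on $\widetilde F\setminus x_0$ with $\mathcal I(\eta,\zeta)=0\in T_0$, and the coincidence conditions then single out a subspace.

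The first step is to package this. On the open dense locus of $\mathcal J_g\times Div_d$ where the meromorphic function attached to $(\eta,\zeta)$ is regular and finite at each $p_{i_k}$, there is a partial evaluation map to $\prod^{n(F)}S^2$ recording the common value at the marked points of each node; extending it off that locus by the colimit device used to build $\widehat{Div}$, and combining with $\mathcal I$, one obtains a map $(\mathcal I,\mathcal E)$ whose fibre over $0$ and the point of ``equal values'' is $Hol^d_*((F,J),{\mathbb{C}P}^1)$, cut out by a constrained divisor space $Div^F_d\subset\mathcal J_g\times Div_d$. The technical heart is to prove, verbatim along the lines of Theorem~\ref{thm:homology-fibration}, that $(\mathcal I,\mathcal E)$ is a homology fibration through the stated range. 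One re-runs the stratification of $Sp_d\times Sp_d$ by the colliding-divisor subspaces $P_{d,k}$, now also tracking where the $p_{i_k}$ fall; on the generic open stratum Riemann--Roch gives a genuine fibre bundle and the finitely many coincidence conditions are transverse there, and one feeds the Thom isomorphism, the $5$-lemma and the nested downward inductions exactly as before. The new bookkeeping is the effect of the $n(F)$ coincidence conditions on the connectivity estimates: each is a complex-codimension-one constraint, and tracking their cumulative effect yields effective genus $g(F)-n(F)+1$ in place of $g$, hence the range $q<d-2(g(F)-n(F)+1)$ for $n=1$. Enlarging $Div^F_d$ to a $\widehat{Div}$-type colimit $\widehat{Div}^F_d$ and invoking Proposition~3 of~\cite{mcduff-segal:group-completion}, as in the corollary following Theorem~\ref{lem:divisors}, upgrades this to a homology fibration $\widehat{Div}^F_d\to T_0\times\prod^{n(F)}S^2$.

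On the homotopy-theoretic side the target is built from the fibration ${\mathbb{C}P}^1\to{\mathbb{C}P}^\infty\vee{\mathbb{C}P}^\infty\to{\mathbb{C}P}^\infty$ of~(\ref{eq:fibration}), but with one copy tied in at each node to enforce that the two scanned germs agree at the identified points. Scanning $\widehat{Div}^F_d$ over $\widetilde F$ with the microscope of \textsection5, arranged compatibly at the $p_{i_k}$, gives the scanning map, which by Segal's argument (Theorem~\ref{thm:scanning}) is a fibrewise homotopy equivalence; comparing the homology fibration just obtained with the homotopy fibration so produced, as in Lemma~\ref{lem:homology-fibration-2}, shows $Hol^d_*((F,J),{\mathbb{C}P}^1)\hookrightarrow Map^d_*(F,S^2)$ is a homology isomorphism through the range. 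Passing from based to unbased maps via the base-point evaluation fibration and the spectral-sequence argument of Lemma~\ref{lem:main-lemma} then gives the $n=1$ statement, and the modifications of Theorem~\ref{thm:main-theorem'} --- $(n+1)$-wise disjoint divisors, $W_{n+1}({\mathbb{C}P}^\infty)$, again tied at the nodes --- give the general case.

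\textbf{Main obstacle.} The crux is the homology-fibration step in the presence of the coincidence constraints. One must verify that the conditions $h(p_{i_k})=h(p_{j_l})$ interact transversally with the entire collision stratification --- in particular when a marked point lies on one of the divisors, or when two of the conditions become dependent --- that the Riemann--Roch fibre-bundle statement on the generic stratum survives restriction to the partial diagonal of values, and, most importantly, that the cumulative effect of the $n(F)$ conditions on the connectivity estimates is precisely the shift $g\leadsto g(F)-n(F)+1$. A secondary subtlety is arranging the scanning map to genuinely respect the node constraint and checking that the induced map on fibres is the standard inclusion; this is the analogue of Segal's Lemma~$4.7$ with the extra gluing data, and is exactly the kind of bookkeeping the present paper elsewhere defers to the reader.
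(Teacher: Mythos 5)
First, a point of orientation: the paper does not prove this statement at all.  It is quoted as a theorem of Segal, cited to \cite{segal:scanning}, and the only justification offered is the one-line reduction that a holomorphic map $(F,J)\to{\mathbb{C}P}^n$ from an irreducible singular surface \emph{is} a holomorphic map from the normalization $(F_1,J_1)$ subject to coincidence conditions at the marked points, with the actual work deferred to Segal's results for a fixed smooth Riemann surface.  Your proposal instead re-derives the statement from scratch by re-running the whole divisor/scanning machinery with the node constraints built in.  That is a legitimate strategy in spirit, but note two things: (i) since $J$ is \emph{fixed} here, dragging in the varying-$J$ apparatus ($\mathcal{J}_g$, the functor $\mathcal{D}$, $\widehat{Div}$) is unnecessary and, worse, makes your argument depend on the section $f$ of~(\ref{e1}), whose existence is exactly the unresolved issue of Remark~\ref{error}; for fixed $J$ you should work with Segal's original single-surface construction $\widehat{Div}(J)$, where no such section is needed; (ii) what you call the ``main obstacle'' is not a side condition to be checked later --- it is the entire content of the theorem beyond the smooth case.

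That is where the genuine gap lies.  You never establish that the constrained integration-plus-evaluation map is a homology fibration through any range, and you never derive the stated range; you only assert that ``tracking the cumulative effect'' of the coincidence conditions yields effective genus $g(F)-n(F)+1$.  In fact your heuristic does not reproduce the formula.  With the paper's definitions, an irreducible surface whose normalization has genus $\widetilde{g}$ and which has $\delta$ ordinary nodes (so $k=2\delta$ marked points, $n(F)=\delta$) has $g(F)=\widetilde{g}+\delta$, hence $g(F)-n(F)+1=\widetilde{g}+1$: the bound $q<(d-2(\widetilde{g}+1))(2n-1)$ is \emph{independent} of the number of nodes, whereas ``each of the $n(F)$ conditions is a complex-codimension-one constraint, accumulate them'' predicts a penalty growing with $\delta$ (and the number of independent conditions is $k-n(F)$, not $n(F)$, once nodes with more than two branches are allowed).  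So the quantitative heart of the statement is asserted rather than proved, and the justification you sketch for it is in tension with the claimed bound.  As written, the proposal is a plausible plan whose decisive step --- the homology-fibration estimate in the presence of the node conditions, with the precise range --- is missing; the paper sidesteps this by citing Segal for exactly that step.
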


The idea of the proof is that a holomorphic map $(F,J)\rightarrow \cp$ from a singular surface \textit{is} a holomorphic map $(F_1,J_1)\rightarrow \cp$ with conditions on the marked points $\{p_{1_k}\}$.  The theorem follows from Segal's previous work when $F$ is not singular (i.e., the set of nodes is empty, $P=\emptyset$).

For $F$  a singular surface, write $Diff^+(F)\subset Diff^+(\coprod F_i)$ as the topological subgroup of those diffeomorphisms $\phi$ of $\coprod F_i$ for which $\phi(p_{i_k}) \sim p_{i_k}$.  The group $Diff^+(F)$ acts on the space of pairs 
\[
\mathcal{J}^d_{[F]}({\mathbb{C}P}^n):= \{(J,h)\mid  J\in \mathcal{J}_F \text{ and }h\in Hol^d((F,J),Y)\}
\]
by pulling back complex structures and precomposing with holomorphic maps.

Let $F$ be a singular surface.  Define the \textit{moduli space of type $[F]$} to be the homotopy orbit space
\begin{equation}\label{singular}
\mathcal{M}^d_{[F]}(\cp):=\mathcal{J}^d_{[F]}({\mathbb{C}P}^n)//Diff^+(F).
\end{equation}
Define similarly the \textit{topological moduli space of type $[F]$} as
\[
\mathcal{MT}^d_{[F]}(\cp):=(\mathcal{J}_F\times Map(F,\cp))//Diff^+(F).
\]

\begin{theorem}\label{thm:singular-main-theorem}
Let $F$ be an irreducible singular surface.  The standard map
\[
\mathcal{M}^d_{[F]}(\cp)\rightarrow \mathcal{MT}^d_{[F]}(\cp) \simeq  Map(F,{\mathbb{C}P}^n)//Diff^+(F)
\]
induces an isomorphism in $H_q(-)$ for $q<(d-2(g(F)-n(F)+1))(2n-1)$.
\end{theorem}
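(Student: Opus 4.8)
The plan is to reduce to the normalization and then run the argument of Theorem~\ref{thm:main-theorem'} for a single smooth surface, but with incidence constraints at the marked points. Since $F$ is irreducible, its normalization is a single smooth connected oriented surface $F_1$ carrying the marked points $\{p_{1_k}\}$ and the identifications $p_{1_k}\sim p_{1_l}$ inherited from $F$, and $Diff^+(F)$ is precisely the subgroup of $Diff^+(F_1)$ preserving each $\sim$-class of marked points. By the universal property of the quotient $F = F_1/\!\sim$, a continuous (resp.\ $J$-holomorphic) degree $d$ map $F\to\cp$ is the same as a continuous (resp.\ $J$-holomorphic) degree $d$ map $F_1\to\cp$ that is constant on each $\sim$-class; and the quotient $F_1\to F$ induces an isomorphism on $H_2$, so the two notions of degree agree. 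Writing $Map^d_{[F]}(F_1,\cp)\subset Map^d(F_1,\cp)$ and $\mathcal{J}^d_{[F]}(\cp)\subset\mathcal{J}_{F_1}\times Map^d(F_1,\cp)$ for the corresponding incidence-constrained subspaces, one has $\mathcal{MT}^d_{[F]}(\cp)\simeq Map^d_{[F]}(F_1,\cp)/\!/Diff^+(F)$ (using contractibility of $\mathcal{J}_F$) and $\mathcal{M}^d_{[F]}(\cp) = \mathcal{J}^d_{[F]}(\cp)/\!/Diff^+(F)$. Granting the key homology equivalence
\[
\mathcal{J}^d_{[F]}(\cp)\hookrightarrow \mathcal{J}_{F_1}\times Map^d_{[F]}(F_1,\cp)\qquad\text{in }H_q\text{ for }q<(d-2(g(F)-n(F)+1))(2n-1),
\]
the proof finishes exactly as Theorem~\ref{thm:main-theorem-1} finishes from Lemma~\ref{lem:main-lemma}: the inclusion is $Diff^+(F)$-equivariant, hence induces a morphism of Borel-construction fibration sequences over $BDiff^+(F)$, and the Zeeman comparison/$5$-lemma argument applies.

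For the displayed homology equivalence I would re-run Sections~3--5 for the singular curve $F$ itself. Segal's proof of the singular-curve theorem quoted above regards a based holomorphic map $F\to\cp$ (an $(n+1)$-tuple of degree $d$ polynomial sections that are $(n+1)$-wise without common zero) as a configuration of divisors \emph{on $F$} --- positive divisors on $F_1$ together with the gluing data at the nodes --- and invokes the modified Abel theorem: such a configuration arises from a map to $\cp$ exactly when its image under integration vanishes in the generalized Jacobian of $F$, which is again a torus. One then builds, parametrized over $\mathcal{J}_F$, the stabilized divisor space $\widehat{Div}^{(n)}_{d,[F]}$ (using a section $f$ as in~(\ref{e1})), a fibration $\widehat{Div}^{(n)}_{d,[F]}\to\mathcal{J}_F$, and an integration map $\mathcal{I}$ whose fiber over $0$ is $\mathcal{J}^d_{[F]}(\cp)_*$. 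The stratification/normal-bundle induction of Theorem~\ref{thm:homology-fibration} and the stabilization comparison of Theorem~\ref{lem:divisors} go through with $J$ as an extra harmless parameter; the threshold $d-2(g(F)-n(F)+1)$, resp.\ its $(2n-1)$-multiple, is exactly Segal's and enters through the Riemann--Roch count for degree-$d$ line bundles on the nodal curve $F$ in the base case of the nested induction. The scanning map of Section~5 is applied to $\widehat{Div}^{(n)}_{d,[F]}$ --- a small ball in $F$ around a node is a wedge of two $J$-balls, so the scanned germ lands in the corresponding wedge-product local model --- and the constrained analogue of Theorem~\ref{thm:scanning} and Corollary~\ref{scan-equivalence}, that this scanning map is an equivalence onto the incidence-constrained mapping space, is Segal's singular-curve argument with $J$ carried along formally. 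Comparing the resulting constrained homology fibration with the constrained homotopy-theoretic fibration built from $W_{n+1}({\mathbb{C}P}^\infty)$, as in the proof of Theorem~\ref{thm:main-theorem'}, then yields the displayed equivalence.

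The structural obstruction is the same one that afflicts the rest of the paper: constructing $\widehat{Div}^{(n)}_{d,[F]}$ with a fibration to $\mathcal{J}_F$ relies on a section $f$ of $\mathcal{J}^{d_0}_{F_1}({\mathbb{C}P}^1)\to\mathcal{J}_{F_1}$, whose existence is precisely what Remark~\ref{error} calls into doubt. Setting that aside, the genuine work is twofold. First, one must arrange the stabilization maps $\iota_n$ to add their auxiliary divisors away from all marked points, uniformly in $J$, so that the incidence constraints are genuinely preserved under stabilization --- a mild strengthening of condition~(\ref{epsilon}). Second, and more substantively, one must check that the incidence constraint localizes correctly under scanning: the gluing datum at each node should be recoverable from the germ of the divisor configuration there, so that the constrained divisor space scans onto precisely the constrained mapping space. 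This is the heart of Segal's singular-curve argument in~\cite{segal:scanning}, and transporting it to the $\mathcal{J}$-family is routine; but it is also where the exact constant $g(F)-n(F)+1$ is forced, and an off-by-one in the range would most easily slip in through a miscount of $h^0$ or $h^1$ of a degree-$d$ line bundle on the nodal curve $F$, so that bookkeeping should be carried out carefully.
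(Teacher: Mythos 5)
Your proposal follows essentially the same route as the paper's own (very brief) proof: reduce to the normalization viewed as a smooth surface with incidence conditions at the marked points, establish a singular-surface analogue of Lemma~\ref{lem:main-lemma} by re-running Segal's argument with the complex structure as a parameter, and then apply the Borel-construction spectral sequence argument of Theorem~\ref{thm:main-theorem-1} to handle $Diff^+(F)$. Your write-up is in fact more detailed than the paper's outline, and you correctly note that the construction still hinges on the section $f$ of~(\ref{e1}) whose existence Remark~\ref{error} calls into doubt, so the same caveat applies here as to the main theorem.
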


\begin{proof}
The proof will merely be outlined.  As demonstrated with the proof of Theorem~\ref{thm:main-theorem}, it is possible to follow the lines of Segal for singular Riemann surfaces while keeping track of the complex structure on such Riemann surfaces as a variable.  This leads to a statement analogous to Lemma~\ref{lem:main-lemma} for singular surfaces.  The same spectral sequence argument from the proof of Theorem~\ref{thm:main-theorem-1} is in place to account for quotienting by the action of $Diff^+(F)$.  

\end{proof}

\begin{remark}

This remark serves to report a curiosity of the author.  Consider the compactified moduli space $\overline{\mathcal{M}}^d_g({\mathbb{C}P}^n)$ of Gromov-Witten theory (see \cite{mcduff-salamon:holomorphic-curves} for a general reference).  An $S$-point of this space (stack) is in particular a family of nodal Riemann surfaces over $S$ together with a degree $d$ holomorphic maps into ${\mathbb{C}P}^n$.  There is a \textit{topological} counterpart $\overline{\mathcal{MT}}^d_g({\mathbb{C}P}^n)$ which replaces holomorphic maps by continuous maps. Write 
\[
\overline{\mathcal{M}}^{irr}_g({\mathbb{C}P}^n)\hookrightarrow \overline{\mathcal{M}}_g({\mathbb{C}P}^n)
\]
for the subspace (substack)
consisting of irreducible nodal surfaces, and similarly for the topological counterpart.  There is a natural stratification of $\overline{\mathcal{M}}^{irr}_g({\mathbb{C}P}^n)$ in terms of the number of nodes of surfaces (see \cite{kock-vainsencher:kontsevich} for example).  It is a naive observation that the open strata are moduli spaces of the form~(\ref{singular}).  One might expect then from Theorem~\ref{thm:singular-main-theorem} that the apparent comparison
\[
\overline{\mathcal{M}}^{d,{irr}}_g({\mathbb{C}P}^n)\hookrightarrow \overline{\mathcal{MT}}^{d,irr}_g({\mathbb{C}P}^n)
\]
is then an isomorphism in $H_q$ for $q<<g<<d$.  A needed ingredient is a result on the existence of regular neighborhoods of the closed strata of $\overline{\mathcal{M}}^{irr}_g({\mathbb{C}P}^n)$.

\end{remark}


\begin{thebibliography}{McC01}

\bibitem[Arn70]{arnold:braids}
V.~I. Arnold.
\newblock Certain topological invariants of algebrac functions.
\newblock {\em Trudy Moskov. Mat. Ob\v s\v c.}, 21:27--46, 1970.

\bibitem[CM06]{cohen-madsen:background}
Ralph~L. Cohen and Ib~Madsen.
\newblock Surfaces in a background space and the homology of mapping class
  groups.
\newblock on the ArXiv, 2006.

\bibitem[DL62]{dyer-lashof:operations}
Eldon Dyer and R.~K. Lashof.
\newblock Homology of iterated loop spaces.
\newblock {\em Amer. J. Math.}, 84:35--88, 1962.

\bibitem[DM69]{deligne-mumford:comapct}
P.~Deligne and D.~Mumford.
\newblock The irreducibility of the space of curves of given genus.
\newblock {\em Inst. Hautes \'Etudes Sci. Publ. Math.}, (36):75--109, 1969.

\bibitem[GH94]{griffiths-harris:principles-of-algebraic-geometry}
Phillip Griffiths and Joseph Harris.
\newblock {\em Principles of algebraic geometry}.
\newblock Wiley Classics Library. John Wiley \& Sons Inc., New York, 1994.
\newblock Reprint of the 1978 original.

\bibitem[Har85]{harer:stability}
John~L. Harer.
\newblock Stability of the homology of the mapping class groups of orientable
  surfaces.
\newblock {\em Ann. of Math. (2)}, 121(2):215--249, 1985.

\bibitem[HM98]{harris-morrison:curves}
Joe Harris and Ian Morrison.
\newblock {\em Moduli of curves}, volume 187 of {\em Graduate Texts in
  Mathematics}.
\newblock Springer-Verlag, New York, 1998.

\bibitem[KV99]{kock-vainsencher:kontsevich}
Joachim Kock and Israel Vainsencher.
\newblock {\em A f\'ormula de {K}ontsevich para curvas racionais planas}.
\newblock 22$\sp {\rm o}$ Col\'oquio Brasileiro de Matem\'atica. [22nd
  Brazilian Mathematics Colloquium]. Instituto de Matem\'atica Pura e Aplicada
  (IMPA), Rio de Janeiro, 1999.

\bibitem[McC01]{mccleary:spectral-sequences}
John McCleary.
\newblock {\em A user's guide to spectral sequences}, volume~58 of {\em
  Cambridge Studies in Advanced Mathematics}.
\newblock Cambridge University Press, Cambridge, second edition, 2001.

\bibitem[MS04]{mcduff-salamon:holomorphic-curves}
Dusa McDuff and Dietmar Salamon.
\newblock {\em {$J$}-holomorphic curves and symplectic topology}, volume~52 of
  {\em American Mathematical Society Colloquium Publications}.
\newblock American Mathematical Society, Providence, RI, 2004.

\bibitem[MS76]{mcduff-segal:group-completion}
D.~McDuff and G.~Segal.
\newblock Homology fibrations and the ``group-completion'' theorem.
\newblock {\em Invent. Math.}, 31(3):279--284, 1975/76.

\bibitem[NN57]{newlander-nirenberg:complex-almost-complex}
A.~Newlander and L.~Nirenberg.
\newblock Complex analytic coordinates in almost complex manifolds.
\newblock {\em Ann. of Math. (2)}, 65:391--404, 1957.

\bibitem[Seg79]{segal:scanning}
Graeme Segal.
\newblock The topology of spaces of rational functions.
\newblock {\em Acta Math.}, 143(1-2):39--72, 1979.

\bibitem[Zee57]{zeeman:comparison}
E.~C. Zeeman.
\newblock A proof of the comparison theorem for spectral sequences.
\newblock {\em Proc. Cambridge Philos. Soc.}, 53:57--62, 1957.

\end{thebibliography}
\end{document}